\DeclareMathOperator{\interior}{int}
\DeclareMathOperator{\ext}{ext}
\DeclareMathOperator{\dist}{dist}
\DeclareMathOperator{\argmin}{argmin}
\DeclareMathOperator{\argmax}{argmax}
\begin{document}
\newtheorem{oberklasse}{OberKlasse}
\newtheorem{lemma}[oberklasse]{Lemma}
\newtheorem{proposition}[oberklasse]{Proposition}
\newtheorem{theorem}[oberklasse]{Theorem}
\newtheorem{remark}[oberklasse]{Remark}
\newtheorem{corollary}[oberklasse]{Corollary}
\newtheorem{definition}[oberklasse]{Definition}
\newtheorem{assumption}{Assumption}

\newcommand{\R}{\mathbbm{R}}
\newcommand{\N}{\mathbbm{N}}
\newcommand{\Z}{\mathbbm{Z}}
\newcommand{\mc}{\mathcal}
\newcommand{\mf}{\mathfrak}
\newcommand{\eps}{\varepsilon}
\renewcommand{\phi}{\varphi}
\newcommand{\mynote}[1]{{\begin{center}\textcolor{red}{\textsc{#1}}\end{center}}}

\allowdisplaybreaks[1] 

\title{A linear programming approach to approximating the infinite time reachable set
of strictly stable linear control systems}
\author{Andreas Ernst, Lars Gr\"une, and Janosch Rieger}
\date{\today}
\maketitle

\begin{abstract}
We develop a new numerical method for approximating the infinite time 
reachable set of strictly stable linear control systems.
By solving a linear program with constraints that incorporate the system 
dynamics, we compute a polytope with fixed facet normals as an outer 
approximation of the limit set.
In particular, this approach does not rely on forward iteration
of finite-time reachable sets.
\end{abstract}

\medskip

\noindent\textbf{MSC Codes:} 93B03, 90C05, 93D20 

\noindent\textbf{Keywords:} Reachable set, limit set, discrete-time linear systems,
numerical approximation, polytopes, linear optimization, disjunctive program

\section{Introduction}

The approximation of finite time reachable sets of linear control systems 
has been studied by a number of mathematicians, engineers and computer scientists,
using a variety of approaches, so the following list is by no means exhaustive. 
In \cite{Baier}, a number of optimal control problems is solved to obtain 
a discretization of the support function of the reachable set in fixed directions,
while \cite{Graettinger} and \cite{Shao} essentially apply Benson's algorithm
to construct the reachable set adaptively.
Inner and outer approximations by zonotopes have been explored in \cite{Girard},
and inner and outer approximations by displacements of homothetic bodies
have been investigated in \cite{Fiacchini}.
Approximations by polytopes with fixed facet normals have been 
discussed in \cite{BenSassi}. 

\medskip 

For a strictly stable system with compact control input set, the limit of the finite
time reachable sets (as time tends to infinity) is a well-defined object, 
see \cite{Kolmanovsky}. 
We refer to it as the infinite time reachable set.
It is a compact subset of the state space, which is invariant under
the control dynamics and attracts any trajectory of the system.
The contracting dynamics allow the design of a priori and a posteriori estimates
for the Hausdorff error between an iterated compact set and the limit set,
see \cite{Artstein} and \cite{Rakovic05}.
As a consequence, all methods for the approximation of finite-time reachable sets
mentioned above can in principle be used to approximate the limit by forward
iteration.

\medskip

In the present paper, we use some of the above ideas, but pursue a completely
different approach to the approximation of the limit set, which is in a vague sense
conceptually similar with the papers \cite{Dorea}, \cite{Gaitsgory} 
and \cite{Korda} on weakly invariant sets of control systems.
In a first step, we analyze the induced dynamics on the space of the
nonempty compact sets, extending some of the results in \cite{Artstein}.
Then we use this insight and an approximation theorem from \cite{Rieger:17} 
to prove that an outer approximation by a polytope with fixed facet normals
can be computed as the solution to a disjunctive optimization problem,
avoiding the need for forward iteration.

\medskip

As the solution of a disjunctive program is difficult to compute, 
we construct a dual problem, which is a linear program and possesses 
the same global optimizer as the original disjunctive program.
In order to ensure uniqueness of the dual optimum, we inflate 
the system slightly to push the optimum away from some critical constraints
to create a situation in which this is relatively simple.
Then we use a stability result from \cite{Robinson} to conclude 
that the unperturbed dual problem has a unique solution, which coincides 
with the unique primal minimizer we wish to compute.

\section{Setting and Notation} \label{SN}

We fix a matrix $C\in\R^{d\times d}$ with spectral radius $\rho(C)<1$, 
a matrix $D\in\R^{d\times m}$ and a nonempty convex and compact control set 
$U\subset\R^m$, and we analyze the behavior of the control system
\begin{equation} \label{cs}
x_{k+1}=Cx_k+Du_k,\quad u_k\in U,
\end{equation}
on the unbounded time interval.
In particular, we will approximate its infinite time reachable set and
its all time reachable set by polytopes.

Throughout this paper, we fix a number $\ell\in(0,\rho(C))$.
By Lemma 5.6.10 in \cite{Horn:13}, there exists a norm 
$\|\cdot\|_\ell:\R^d\to\R_+$ such that the induced matrix norm 
satisfies $\|C\|_\ell\le\ell$.
Since $\R^d$ is finite-dimensional, there exist $c_{2,\ell},c_{\ell,2}>0$ with
\begin{equation}\label{equivalence}
c_{2,\ell}^{-1}\|x\|_\ell\le\|x\|
\le c_{\ell,2}\|x\|_\ell\quad\forall\,x\in\R^d,
\end{equation}
where $\|\cdot\|:\R^d\to\R_+$ denotes the Euclidean norm.

We denote the space of all nonempty compact subsets of $\R^d$ by $\mc{K}(\R^d)$
and the space of all nonempty compact and convex subsets of $\R^d$ 
by $\mc{K}_c(\R^d)$.
The Hausdorff semi-distance $\dist:\mc{K}(\R^d)\times\mc{K}(\R^d)\to\R_+$ 
and the corresponding symmetric Hausdorff distance 
$\dist_H:\mc{K}(\R^d)\times\mc{K}(\R^d)\to\R_+$ 
are defined by
\begin{align*}
&\dist(X,X')=\sup_{x\in X}\inf_{x'\in X'}\|x-x'\|,\\
&\dist_H(X,X')=\max\{\dist(X,X'),\dist(X',X)\}.
\end{align*}
For any $X\in\mc{K}(\R^d)$ and $R>0$, we write
\[B_R(X):=\{x\in\R^d:\dist(x,X)\le R\}\quad\text{and}\quad 
\|X\|:=\sup_{x\in X}\|x\|.\]
Identical notation with a subscript or superscript $\ell$ will be used
when the underlying norm is $\|\cdot\|_\ell:\R^d\to\R_+$.
Note that for any $X\in\mc{K}_c(\R^d)$ and $R>0$, the property
$B_R^\ell(X)\in\mc{K}_c(\R^d)$ still holds in this non-Euclidean
geometry by triangle inequality.

The support function of a set $X\in\mc{K}_c(\R^d)$ is a mapping
\[\sigma_X:\R^d\to\R,\quad\sigma_X(p):=\max_{x\in X}p^Tx.\]
For a set-valued map $F:\R^d\to\mc{K}(\R^d)$ and $X\in\mc{K}(\R^d)$,
we denote the image and the preimage of $X$ by
\[F(X):=\cup_{x\in X}F(x)\quad\text{and}\quad
F^{-1}(X):=\{x\in\R^d:F(x)\cap X\neq\emptyset\}.\]
The vector $\mathbbm{1}\in\R^N$ is the vector with the number $1$ 
in all $N$ components, and the vector $e_i$ is the $i$-th unit vector.
For any convex set $X\subset\R^d$, the set of extreme points of $X$
is denoted $\ext(X)$, and its interior is denoted $\interior(X)$.

\section{Preliminaries and auxiliary results} \label{PAR}

The fact that all norms on $\R^d$ are equivalent is reflected by a similar
statement for Hausdorff semi-distances and Hausdorff distances on $\mc{K}(\R^d)$.

\begin{lemma} \label{equivalent:Hausdorff}
The Hausdorff semi-distances $\dist$ and $\dist^\ell$ as well as the 
Hausdorff distances $\dist_H$ and $\dist_H^\ell$ are equivalent.
\end{lemma}

\begin{proof}
For any $X,X'\in\mc{K}(\R^d)$, we compute
\begin{align*}
&c_{2,\ell}^{-1}\dist^\ell(X,X')
=c_{2,\ell}^{-1}\sup_{x\in X}\inf_{x'\in X'}\|x-x'\|_\ell
\le \sup_{x\in X}\inf_{x'\in X'}\|x-x'\|
=\dist(X,X'),\\
&\dist(X,X')=\sup_{x\in X}\inf_{x'\in X'}\|x-x'\|
\le c_{\ell,2}\sup_{x\in X}\inf_{x'\in X'}\|x-x'\|_\ell
=c_{\ell,2}\dist^\ell(X,X'),
\end{align*}
and hence
\[c_{2,\ell}^{-1}\dist_H^\ell(X,X')\le\dist_H(X,X')\le c_{\ell,2}\dist_H^\ell(X,X').\]
\end{proof}

Given a matrix $A\in\R^{N\times d}$, we define a space of polyhedra 
by setting 
\[\mc{G}_A:=\{Q_{A,b}:b\in\R^N\}\setminus\{\emptyset\},\quad
Q_{A,b}:=\{x\in\R^d: Ax\le b\}.\]
This space has been explored in depth in the paper \cite{Rieger:17}.
We recapitulate the relevant facts as briefly as possible
and refer to \cite{Rieger:17} for technical details.

Throughout the rest of the paper, we require the following assumptions.

\begin{assumption} \label{A:ass}
The matrix $A\in\R^{N\times d}$ has the following properties.
\begin{itemize}
\item [a)] It consists of pairwise distinct rows $a_1^T,\ldots,a_N^T$ 
satisfying $a_i\in\R^d$ and $\|a_i\|_2=1$ for $i=1,\ldots,N$.
\item [b)] We have $Q_{A,0}=\{0\}$.
\end{itemize}
\end{assumption}

Assumption \ref{A:ass}b) holds whenever the rows of $A$ 
are reasonably dense in the sphere, see Theorem 16 in \cite{Rieger:17},
and by Corollary 17 in \cite{Rieger:17}, it
guarantees that the space $\mc{G}_A$ consists of (bounded) polytopes. 
By Theorem 13 in \cite{Rieger:17}, the mapping $b\mapsto Q_{A,b}$
is bi-Lipschitz w.r.t.\ Hausdorff distance.

\medskip

Intersections of polytopes can be expressed as the componentwise infimum
of their representations.

\begin{lemma} \label{intersections}
Let $\mc{B}\subset\R^N$ be a subset with $\cap_{b\in\mc{B}}Q_{A,b}\neq\emptyset$,
and let $b^*\in\R^N$ be given by $b^*_i:=\inf_{b\in\mc{B}}b_i$.
Then $Q_{A,b*}=\cap_{b\in\mc{B}}Q_{A,b}$.
\end{lemma}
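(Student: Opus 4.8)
The plan is to prove the two set inclusions separately, after first checking that $b^*$ is a genuine element of $\R^N$, so that $Q_{A,b^*}$ is well-defined in the sense of Section \ref{PAR}. For this preliminary step I would fix some $x_0\in\cap_{b\in\mc{B}}Q_{A,b}$, which exists by hypothesis. Then $a_i^Tx_0\le b_i$ holds for every $b\in\mc{B}$ and every index $i$, so passing to the infimum over $b$ gives $a_i^Tx_0\le b_i^*$; in particular each $b_i^*$ is bounded below, hence finite. It is also bounded above, since $\mc{B}\neq\emptyset$ allows us to fix any $b\in\mc{B}$ and observe $b_i^*=\inf_{b'\in\mc{B}}b_i'\le b_i<\infty$. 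Thus $b^*\in\R^N$.

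The inclusion $Q_{A,b^*}\subseteq\cap_{b\in\mc{B}}Q_{A,b}$ follows directly from the definition of the infimum: if $x\in Q_{A,b^*}$, then $a_i^Tx\le b_i^*=\inf_{b\in\mc{B}}b_i\le b_i$ for every $b\in\mc{B}$ and every $i$, so $x\in Q_{A,b}$ for all $b\in\mc{B}$, i.e.\ $x\in\cap_{b\in\mc{B}}Q_{A,b}$.

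For the reverse inclusion, let $x\in\cap_{b\in\mc{B}}Q_{A,b}$. Then $a_i^Tx\le b_i$ holds for every $b\in\mc{B}$, so $a_i^Tx$ is a lower bound of the set $\{b_i:b\in\mc{B}\}$; taking the infimum yields $a_i^Tx\le b_i^*$ for every $i$, which is precisely the statement $x\in Q_{A,b^*}$.

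I do not expect any real obstacle here: the claim reduces to the elementary equivalence of the constraint ``$a_i^Tx\le b_i$ for all $b\in\mc{B}$'' with ``$a_i^Tx\le\inf_{b\in\mc{B}}b_i$''. The only point at which the nonemptiness hypothesis is genuinely used is the well-definedness of $b^*$ as a finite vector; without it the componentwise infimum could equal $-\infty$ for some coordinate, in which case $Q_{A,b^*}$ would degenerate to the empty set and the asserted equality would fail against a nonempty right-hand side.
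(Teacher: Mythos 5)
Your proof is correct and takes essentially the same route as the paper: the first inclusion is argued identically, and your direct greatest-lower-bound argument for $\cap_{b\in\mc{B}}Q_{A,b}\subseteq Q_{A,b^*}$ is simply the contrapositive of the paper's step (which takes $x\notin Q_{A,b^*}$ and extracts $b'\in\mc{B}$ with $b'_i<a_i^Tx$), while your preliminary verification that $b^*$ is a finite vector is a worthwhile detail the paper leaves implicit. One small quibble with your closing remark: if some coordinate had $b_i^*=-\infty$, then any $x$ in the intersection would satisfy $a_i^Tx\le b_i$ for all $b\in\mc{B}$ and hence $a_i^Tx=-\infty$, which is impossible, so the right-hand side would itself be empty --- the equality could therefore never ``fail against a nonempty right-hand side''; the hypothesis is needed only for the well-definedness of $b^*$, exactly as you say in the first half of that remark.
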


\begin{proof}
If $x\in Q_{A,b^*}$, then $a_i^Tx\le b^*_i\le b_i$ for all $b\in\mc{B}$
and $i\in\{1,\ldots,N\}$, so $x\in\cap_{b\in\mc{B}}Q_{A,b}$.
If, on the other hand, we have $x\notin Q_{A,b^*}$, then there exists
$i\in\{1,\ldots,N\}$ with $b_i^*<a_i^Tx$.
By definition of the infimum, there exists $b'\in\mc{B}$ 
with $b_i^*\le b'_i<a_i^Tx$, and hence we have 
$x\notin Q_{A,b'}\supset(\cap_{b\in\mc{B}}Q_{A,b})$.
\end{proof}

The quantity 
\[\kappa_A:=\sup_{\|c\|_2=1}\,
\inf\Big\{\sum_{k=1}^Np_k\|a_k-\frac{1}{\|p\|_1}c\|_2:
p\in\ext(\{q\in\R^N:A^Tq=c,\ q\ge 0\})\Big\}\]
from Proposition 46 in \cite{Rieger:17} measures, roughly speaking, how easily
points on the unit sphere can be positively combined from the rows of $A$. 
It controls the approximation properties of the space $\mc{G}_A$
as a subspace of $\mc{K}_c(\R^d)$ in the following sense, 
see Theorem 47 in \cite{Rieger:17}.
\begin{theorem} \label{projector}
The mapping
\begin{align*}
&\pi_{\mc{G}_A}:\mc{K}_c(\R^d)\to\mc{G}_A,\quad 
\pi_{\mc{G}_A}(X):=Q_{A,b_X}\\
&(b_X)_i:=\max_{x\in X}a_i^Tx\quad\text{for}\quad i=1,\ldots,N,
\end{align*}
is a projector from $\mc{K}_c(\R^d)$ onto $\mc{G}_A$, it is
Lipschitz w.r.t.\ $\dist_H$, it is monotone w.r.t.\ inclusion, and it satisfies 
\[X\subset\pi_{\mc{G}_A}(X)\subset B(X,\kappa_A\|X\|)\quad
\forall\,X\in\mc{K}_c(\R^d).\]
\end{theorem}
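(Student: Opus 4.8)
The plan is to establish the four assertions separately, deferring the right-hand inclusion of the displayed chain, which is the only place the constant $\kappa_A$ appears and the only step requiring real work. The starting observation is that $(b_X)_i=\sigma_X(a_i)$, so $\pi_{\mc{G}_A}(X)$ is precisely the intersection of the half-spaces $\{x:a_i^Tx\le\sigma_X(a_i)\}$ supporting $X$ in the prescribed normal directions. From this, well-definedness of the map into $\mc{G}_A$ and the left inclusion $X\subset\pi_{\mc{G}_A}(X)$ are immediate: each $x\in X$ satisfies $a_i^Tx\le\max_{y\in X}a_i^Ty=(b_X)_i$, so $x\in Q_{A,b_X}$, and in particular $Q_{A,b_X}\neq\emptyset$.

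For the projector property I would show that $\pi_{\mc{G}_A}$ acts as the identity on $\mc{G}_A$. Given $P=Q_{A,b}\in\mc{G}_A$, the defining inequalities give $\sigma_P(a_i)\le b_i$, hence $b_P\le b$ componentwise and $Q_{A,b_P}\subset Q_{A,b}=P$; together with $P\subset Q_{A,b_P}$ from the left inclusion this forces $\pi_{\mc{G}_A}(P)=P$, irrespective of the representing vector $b$. Monotonicity is equally direct, since $X\subset X'$ yields $\sigma_X(a_i)\le\sigma_{X'}(a_i)$ for every $i$ and therefore $b_X\le b_{X'}$ and $Q_{A,b_X}\subset Q_{A,b_{X'}}$. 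For Lipschitz continuity I would factor $\pi_{\mc{G}_A}$ as $X\mapsto b_X\mapsto Q_{A,b_X}$: because $\|a_i\|_2=1$, the standard support-function estimate gives $|\sigma_X(a_i)-\sigma_{X'}(a_i)|\le\dist_H(X,X')$, so $\|b_X-b_{X'}\|_2\le\sqrt{N}\,\dist_H(X,X')$, and composition with the Lipschitz map $b\mapsto Q_{A,b}$ from Theorem 13 in \cite{Rieger:17} completes this part.

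The substantive step is the right inclusion $\pi_{\mc{G}_A}(X)\subset B(X,\kappa_A\|X\|)$. Since both sets are convex and $X\subset\pi_{\mc{G}_A}(X)$, this is equivalent to the bound $\dist(\pi_{\mc{G}_A}(X),X)=\sup_{\|c\|_2=1}\big(\sigma_{\pi_{\mc{G}_A}(X)}(c)-\sigma_X(c)\big)\le\kappa_A\|X\|$. Here I would invoke linear programming duality: Assumption \ref{A:ass}b) makes the polar cone $Q_{A,0}$ trivial, so $\cone\{a_1,\dots,a_N\}=\R^d$ and the dual feasible set $\{q\ge0:A^Tq=c\}$ is nonempty for every $c$; since $Q_{A,b_X}$ is bounded, strong duality gives $\sigma_{\pi_{\mc{G}_A}(X)}(c)=\min\{\sum_k q_k\sigma_X(a_k):A^Tq=c,\ q\ge0\}$, with the minimum attained at an extreme point because the feasible set lies in the pointed nonnegative orthant. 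Bounding this minimum above by the value at an arbitrary extreme point $p$, using positive homogeneity to write $\sigma_X(c)=\|p\|_1\,\sigma_X(\tfrac{1}{\|p\|_1}c)$, and applying the Lipschitz estimate $|\sigma_X(a_k)-\sigma_X(\tfrac{1}{\|p\|_1}c)|\le\|X\|\,\|a_k-\tfrac{1}{\|p\|_1}c\|_2$ term by term, I obtain $\sigma_{\pi_{\mc{G}_A}(X)}(c)-\sigma_X(c)\le\|X\|\sum_k p_k\|a_k-\tfrac{1}{\|p\|_1}c\|_2$. Taking the infimum over extreme points and then the supremum over unit vectors $c$ reproduces the definition of $\kappa_A$ exactly and yields the claim.

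I expect the only real friction to lie in the duality bookkeeping: one must confirm that the dual polyhedron is nonempty (guaranteed by Assumption \ref{A:ass}b)) and attains its minimum at a vertex, and that $p\neq0$ so that $\tfrac{1}{\|p\|_1}c$ is well defined, which holds because $A^Tp=c$ with $\|c\|_2=1$ forces $\|p\|_1\ge1$. Once these points are settled, the estimate is purely a matter of the elementary homogeneity and Lipschitz properties of support functions, and the remaining three assertions are essentially formal.
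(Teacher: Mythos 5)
Your proof is correct, but note one structural point: the paper itself does not prove this theorem at all --- it imports it verbatim as Theorem 47 of \cite{Rieger:17} (just as it imports the boundedness of the sets $Q_{A,b}$ from Corollary 17 and the bi-Lipschitzness of $b\mapsto Q_{A,b}$ from Theorem 13 of that reference), so there is no internal proof to compare against. Judged on its own merits, your argument is sound and essentially reconstructs what the definition of $\kappa_A$ forces: the formal parts (well-definedness, $X\subset Q_{A,b_X}$, idempotence on $\mc{G}_A$ independent of the representing vector, monotonicity, and the factorization $X\mapsto b_X\mapsto Q_{A,b_X}$ for Lipschitz continuity, the second factor being exactly the quoted Theorem 13) are all handled correctly, and the substantive right-hand inclusion is done by the natural route. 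Your duality bookkeeping checks out: Assumption \ref{A:ass}b) gives $\cone\{a_1,\ldots,a_N\}=(Q_{A,0})^\circ=\R^d$ by the bipolar theorem for finitely generated (hence closed) cones, so the dual polyhedron $\{q\ge 0: A^Tq=c\}$ is nonempty; it is pointed, so the attained minimum sits at an extreme point; $\|p\|_1\ge\|A^Tp\|_2=\|c\|_2=1$ since the rows are unit vectors, so the normalization $\tfrac{1}{\|p\|_1}c$ is legitimate; and the chain
\begin{align*}
\sigma_{Q_{A,b_X}}(c)-\sigma_X(c)
&\le\sum_k p_k\sigma_X(a_k)-\|p\|_1\,\sigma_X\bigl(\tfrac{1}{\|p\|_1}c\bigr)
\le\|X\|\sum_k p_k\bigl\|a_k-\tfrac{1}{\|p\|_1}c\bigr\|_2
\end{align*}
combined with the support-function criterion for the inclusion of compact convex sets yields exactly $\dist(\pi_{\mc{G}_A}(X),X)\le\kappa_A\|X\|$. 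The only tacit step worth making explicit is the identity $\dist(Y,X)=\sup_{\|c\|_2=1}(\sigma_Y(c)-\sigma_X(c))$ for compact convex $X\subset Y$, equivalently that $Y\subset X+rB$ iff $\sigma_Y\le\sigma_X+r\|\cdot\|_2$; this is standard and unproblematic.
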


Throughout this paper, we assume the following relation between the 
contraction rate $\ell$ of the matrix $C$ and the quality $\kappa_A$
of the approximation of $\mc{K}_c(\R^d)$ by $\mc{G_A}$.
It can be achieved by choosing $A$ such that its rows are sufficiently dense 
in the sphere, see the context of Proposition 46 in \cite{Rieger:17}.

\begin{assumption} \label{kappaineqass}
The matrix $A$ satisfies
\begin{equation*} 
\kappa_A<\frac{1-\ell}{c_{2,\ell}c_{\ell,2}\ell}.
\end{equation*}
\end{assumption}

We exploit this relation in the following lemma.

\begin{lemma} \label{not:worse:than:ell}
Let $X\in\mc{K}(\R^d)$, and define
\begin{equation*} %\label{Req}
R_A^X:=\frac{c_{2,\ell}c_{\ell,2}\ell\kappa_A}
{1-\ell-c_{2,\ell}c_{\ell,2}\ell\kappa_A}\|X\|_\ell.
\end{equation*}
For every $R\ge R_A^X$, we obtain %the estimate
%\[\dist^\ell(\pi_{\mc{G}_A}(B_R^\ell(X)),X)\le\ell^{-1}R\]
%and 
the inclusion
\[\pi_{\mc{G}_A}(B_R^\ell(X))
\subset B_{\ell^{-1}R}^\ell(X).\]
\end{lemma}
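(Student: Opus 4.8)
The plan is to obtain the claimed inclusion as a short chain of set inclusions whose only non-elementary input is the projector estimate of Theorem~\ref{projector}, and then to reduce the whole statement to a single scalar inequality which turns out to be exactly the hypothesis $R\ge R_A^X$. The mild complication is that Theorem~\ref{projector} and the constant $\kappa_A$ live in the Euclidean geometry, whereas the assertion is phrased in the $\ell$-geometry, so the argument is essentially a careful translation between the two norms via \eqref{equivalence}.

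First I would apply Theorem~\ref{projector} to the set $B_R^\ell(X)$; for this the set must be convex, which by the note in Section~\ref{SN} holds whenever $X$ is convex, and this convexity is the only structural hypothesis the argument genuinely uses. The theorem then yields the Euclidean inclusion
\[
\pi_{\mc{G}_A}(B_R^\ell(X))\subset B\big(B_R^\ell(X),\,\kappa_A\|B_R^\ell(X)\|\big).
\]
Next I would pass to the $\ell$-geometry in two steps. By the left inequality in \eqref{equivalence}, a Euclidean ball of radius $\rho$ around a set is contained in the $\ell$-ball of the same centre and radius $c_{2,\ell}\rho$; applied with $\rho=\kappa_A\|B_R^\ell(X)\|$ this replaces the Euclidean neighbourhood above by an $\ell$-neighbourhood of $B_R^\ell(X)$. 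Since an $\ell$-ball of radius $s$ around $B_R^\ell(X)$ equals $B_{R+s}^\ell(X)$, which is just the triangle inequality for $\|\cdot\|_\ell$ and holds for arbitrary $X$, I arrive at
\[
\pi_{\mc{G}_A}(B_R^\ell(X))\subset B_{R+c_{2,\ell}\kappa_A\|B_R^\ell(X)\|}^\ell(X).
\]

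It then remains to bound the Euclidean quantity $\|B_R^\ell(X)\|$ in terms of the data. For any $y$ with $\dist^\ell(y,X)\le R$ one has $\|y\|_\ell\le\|X\|_\ell+R$, hence $\|y\|\le c_{\ell,2}(\|X\|_\ell+R)$ by \eqref{equivalence}, giving $\|B_R^\ell(X)\|\le c_{\ell,2}(\|X\|_\ell+R)$. Substituting this bound and using that $B_r^\ell(X)\subset B_{r'}^\ell(X)$ for $r\le r'$, the desired inclusion $\pi_{\mc{G}_A}(B_R^\ell(X))\subset B_{\ell^{-1}R}^\ell(X)$ follows once
\[
R+c_{2,\ell}c_{\ell,2}\kappa_A(\|X\|_\ell+R)\le\ell^{-1}R.
\]
A direct rearrangement shows that this is equivalent to $R\ge R_A^X$, with Assumption~\ref{kappaineqass} entering precisely to guarantee that the denominator $1-\ell-c_{2,\ell}c_{\ell,2}\ell\kappa_A$ of $R_A^X$ is strictly positive, so that $R_A^X$ is a well-defined nonnegative number and the rearrangement preserves the direction of the inequality. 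I expect no serious obstacle: the geometric substance is carried entirely by Theorem~\ref{projector}, and the remaining work is the bookkeeping of keeping $c_{2,\ell}$ and $c_{\ell,2}$ on the correct sides of each norm conversion and checking that the resulting scalar threshold matches $R_A^X$.
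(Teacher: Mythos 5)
Your proof is correct and follows essentially the same route as the paper's: both apply Theorem \ref{projector} to $B_R^\ell(X)$, convert between the Euclidean and $\ell$-geometries via \eqref{equivalence}, bound $\|B_R^\ell(X)\|_\ell\le\|X\|_\ell+R$, and reduce everything to the scalar inequality $c_{2,\ell}c_{\ell,2}\kappa_A(\|X\|_\ell+R)\le(1-\ell)\ell^{-1}R$, which is precisely $R\ge R_A^X$. The only differences are presentational --- the paper writes the chain as estimates on $\dist^\ell$ followed by a triangle inequality, whereas you write it as set inclusions --- and your explicit observation that Theorem \ref{projector} requires $B_R^\ell(X)$ (hence $X$) to be convex is a point the paper's proof passes over silently even though the lemma is stated for $X\in\mc{K}(\R^d)$.
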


\begin{proof}
Since
\begin{align*}
c_{\ell,2}c_{2,\ell}\kappa_A\|X\|_\ell
=((1-\ell)\ell^{-1}-c_{\ell,2}c_{2,\ell}\kappa_A)R_A^X
\le((1-\ell)\ell^{-1}-c_{\ell,2}c_{2,\ell}\kappa_A)R,
\end{align*}
we can use Theorem \ref{projector} to compute
\begin{align*}
&\dist^\ell(\pi_{\mc{G}_A}(B_R^\ell(X)),B_R^\ell(X))
\le c_{\ell,2}\dist(\pi_{\mc{G}_A}(B_R^\ell(X)),B_R^\ell(X))\\
&\le c_{\ell,2}\kappa_A\|B_R^\ell(X)\|
\le c_{\ell,2}c_{2,\ell}\kappa_A\|B_R^\ell(X)\|_\ell\\
&\le c_{\ell,2}c_{2,\ell}\kappa_A(\|X\|_\ell+R)
\le(1-\ell)\ell^{-1}R,
\end{align*}
and we conclude that
\begin{align*}
&\dist^\ell(\pi_{\mc{G}_A}(B_R^\ell(X)),X)\\
&\le\dist^\ell(\pi_{\mc{G}_A}(B_R^\ell(X)),B_R^\ell(X))
+\dist^\ell(B_R^\ell(X),X)\\
&\le(1-\ell)\ell^{-1}R+R=\ell^{-1}R,
\end{align*}
which implies the desired inclusion.
\end{proof}

\section{Properties of the infinite time reachable set} \label{ISP}

Let $V\in\mc{K}_c(\R^d)$, and consider the mapping
\[F:\R^d\to\mc{K}_c(\R^d),\quad F(x):=Cx+V.\]
This setting includes system \eqref{cs} for $V=DU$, and it allows us 
to treat $\eps$-inflations of $F$ with minimal notational complication.

As we need precise statements in the norms we work with, we prove a few 
facts that may in principle be well-known. 
Part c) of the following proposition is, e.g.,
similar to Proposition 4.3 in \cite{Artstein}.

\begin{proposition} \label{inf:reach}
The following statements hold.
\begin{itemize}
\item [a)] The map $F:\R^d\to\mc{K}_c(\R^d)$ is $\ell$-Lipschitz w.r.t.\
$\|\cdot\|_\ell:\R^d\to\R_+$, i.e.
\[\dist_H^\ell(F(x),F(x'))\le\ell\|x-x'\|_\ell\quad\forall\,x,x'\in\R^d.\]
\item [b)] The mapping 
$\mc{F}:\mc{K}(\R^d)\to\mc{K}(\R^d)$ given by $\mc{F}(X):=F(X)$
maps $\mc{K}_c(\R^d)$ into itself and satisfies
\[\dist_H^\ell(\mc{F}(X),\mc{F}(X'))
\le\ell\dist_H^\ell(X,X')\quad\forall\,X,X'\in\mc{K}(\R^d).\]
\item [c)] There exists a unique set $X^*\in\mc{K}(\R^d)$ with $X^*=F(X^*)$,
and for any $X_0\in\mc{K}(\R^d)$, the sequence 
$\{X_k\}_{k\in\N}\subset\mc{K}(\R^d)$ given by
$X_{k+1}=F(X_k)$ for all $k\in\N$ satisfies
\begin{align*} 
&\lim_{k\to\infty}\dist_H^\ell(X_k,X^*)=0,\\
&\dist_H^\ell(X_{k+1},X^*)\le\ell\dist_H^\ell(X_k,X^*)\quad\forall\,k\in\N,\\
&\dist_H^\ell(X_k,X^*)\le\tfrac{\ell^k}{1-\ell}\dist_H^\ell(X_1,X_0).
\end{align*}
\item [d)] We have $X^*\in\mc{K}_c(\R^d)$, and if $X_0\in\mc{K}_c(\R^d)$, then
the sequence $\{X_k\}_{k\in\N}$ in part c) satisfies 
$\{X_k\}_{k\in\N}\subset\mc{K}_c(\R^d)$.
\item [e)] If $X\in\mc{K}(\R^d)$ satisfies $F(X)\subset X$, then $X^*\subset X$.
Conversely, if $X\subset F(X$), then $X\subset X^*$.
\item [f)] We have the a priori estimate $\|X^*\|_\ell\le\frac{1}{1-\ell}\|V\|_\ell$.
\end{itemize}
\end{proposition}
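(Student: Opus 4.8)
The plan is to establish the six parts in their natural logical order, since each builds on the previous ones. Parts a) and b) are the analytic backbone: they express that $F$ and its induced set map $\mc{F}$ are $\ell$-contractions in the $\|\cdot\|_\ell$-geometry. Part a) follows directly from the fact that $F(x)=Cx+V$ is a translate of the fixed set $V$, so $\dist_H^\ell(F(x),F(x'))=\|Cx-Cx'\|_\ell\le\|C\|_\ell\|x-x'\|_\ell\le\ell\|x-x'\|_\ell$, using the induced-norm bound $\|C\|_\ell\le\ell$ from Lemma 5.6.10 in \cite{Horn:13}. For part b), I would first check that $\mc{F}$ preserves compactness (continuous image of a compact set under the affine map, unioned over a compact set) and convexity (the map $x\mapsto Cx+V$ sends convex sets to convex sets since $C(X)+V$ is a Minkowski sum of convex sets); the Lipschitz estimate then reduces to a standard computation with support-type arguments, bounding $\dist^\ell(\mc{F}(X),\mc{F}(X'))$ by $\sup_{x\in X}\inf_{x'\in X'}\dist_H^\ell(F(x),F(x'))$ and invoking part a).

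With a) and b) in hand, part c) is an application of the Banach fixed point theorem on the complete metric space $(\mc{K}(\R^d),\dist_H^\ell)$. The completeness of this space under Hausdorff distance is classical; since $\mc{F}$ is an $\ell$-contraction with $\ell<1$ by part b), there is a unique fixed point $X^*$, and the iterates $X_{k+1}=\mc{F}(X_k)$ converge to it geometrically. The three displayed estimates are then the standard Banach fixed point inequalities: the one-step contraction $\dist_H^\ell(X_{k+1},X^*)=\dist_H^\ell(\mc{F}(X_k),\mc{F}(X^*))\le\ell\,\dist_H^\ell(X_k,X^*)$, and the a priori bound obtained by summing the geometric series $\sum_{j\ge k}\ell^{j}\dist_H^\ell(X_1,X_0)$. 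For part d), I would observe from part b) that $\mc{F}$ maps $\mc{K}_c(\R^d)$ into itself, so the iterates starting in $\mc{K}_c(\R^d)$ stay convex; since $\mc{K}_c(\R^d)$ is closed in $(\mc{K}(\R^d),\dist_H^\ell)$ (a Hausdorff limit of convex sets is convex), the limit $X^*$ is convex as well.

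Part e) is a monotonicity/ordering argument. If $F(X)\subset X$, then iterating the monotone map $\mc{F}$ gives a decreasing sequence $X\supset\mc{F}(X)\supset\mc{F}^2(X)\supset\cdots$; starting the iteration at $X_0=X$ and using convergence to $X^*$ from part c), I would conclude $X^*=\lim_k\mc{F}^k(X)\subset X$, using that each $\mc{F}^k(X)\subset X$ and that Hausdorff limits respect inclusion in the appropriate sense (more carefully: $X^*\subset X$ because $\dist^\ell(X^*,X)\le\dist^\ell(X^*,\mc{F}^k(X))+\dist^\ell(\mc{F}^k(X),X)\to 0$, and $X$ is closed). The converse inclusion for $X\subset F(X)$ is symmetric, yielding an increasing sequence whose limit contains $X$. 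Finally, part f) follows by applying part e) to a suitable ball: one checks that $F(\bar B)\subset \bar B$ for $\bar B:=B_r^\ell(\{0\})$ with $r=\tfrac{1}{1-\ell}\|V\|_\ell$, since $\|Cx+v\|_\ell\le\ell r+\|V\|_\ell=r$ for $x\in\bar B$, $v\in V$, and then e) gives $X^*\subset\bar B$, i.e. $\|X^*\|_\ell\le r$.

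\medskip
\textbf{Main obstacle.} The routine parts are a), c), and f); the step requiring the most care is the Lipschitz estimate in b), where one must correctly handle the two-sided Hausdorff distance for Minkowski-type set maps rather than merely translations. The cleanest route is to bound $\dist^\ell(\mc{F}(X),\mc{F}(X'))$ by reducing, via the union structure $\mc{F}(X)=\cup_{x\in X}F(x)$, to part a) applied pointwise, then symmetrize. I expect the only genuine subtlety to be verifying completeness and closedness of $\mc{K}_c(\R^d)$ in the $\|\cdot\|_\ell$-Hausdorff metric; here Lemma \ref{equivalent:Hausdorff} is useful, since it lets me transfer the classical results, stated for the Euclidean Hausdorff metric, to the $\ell$-metric without reproving them.
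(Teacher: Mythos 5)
Your proposal is correct and takes essentially the same approach as the paper: translation structure for a), the Minkowski-sum and sup/inf computation for b), the contraction mapping principle on the complete space $(\mc{K}(\R^d),\dist_H^\ell)$ for c), closedness of $\mc{K}_c(\R^d)$ under Hausdorff limits for d), and the nested-iterates argument for e). The only deviation is part f), where you exhibit the invariant ball $B_r^\ell(\{0\})$ with $r=\tfrac{1}{1-\ell}\|V\|_\ell$ and invoke part e), while the paper instead uses the fixed-point identity and the triangle inequality to get $\dist^\ell(X^*,0)\le\ell\dist^\ell(X^*,0)+\|V\|_\ell$; both arguments are sound.
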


\begin{proof}
a) For all $x,x'\in\R^d$, we compute
\[\dist^\ell(F(x),F(x'))
=\sup_{v\in V}\inf_{v'\in V}\|Cx+v-Cx'-v'\|_\ell
\le\ell\|x-x'\|_\ell.\]

b) For any $X\in\mc{K}(\R^d)$, we have $\mc{F}(X)=F(X)\in\mc{K}(\R^d)$ 
by Corollary 2.20 and Theorem 2.68 in \cite{Hu:97}.
If $X\in\mc{K}_c(\R^d)$, then $\mc{F}(X)=F(X)=CX+V$ is a Minkowski sum of
two convex sets, and hence $\mc{F}(X)\in\mc{K}_c(\R^d)$,
see \cite[page 47]{Schneider}.  
For any $X,X'\in\mc{K}(\R^d)$, we compute
\begin{align*}
&\dist^\ell(\mc{F}(X),\mc{F}(X'))
=\sup_{x\in X,\ v\in V}\inf_{x'\in X',\ v'\in V}\|Cx+v-Cx'-v'\|_\ell\\
&\le\sup_{x\in X}\inf_{x'\in X'}\|Cx-Cx'\|_\ell
\le\ell\dist^\ell(X,X').
\end{align*}

c) By Proposition 1.6 in \cite{Hu:97}, the space $(\mc{K}(\R^d),\dist_H^\ell)$ 
is complete.
Applying the contraction mapping principle (Theorem 1.A in \cite{Zeidler:86})
to $\mc{F}$ in this situation yields the desired statement.

d) If $X_0\in\mc{K}_c(\R^d)$, then $\{X_k\}_{k\in\N}\subset\mc{K}_c(\R^d)$
follows by induction from part b).
The choice $X_0=\{0\}$ induces a sequence $\{X_k\}_{k\in\N}\subset\mc{K}_c(\R^d)$, 
which, by part c), satisfies $\lim_{k\to\infty}\dist_H^\ell(X_k,X^*)=0$.
According to Theorem 1.8.3 in \cite{Schneider}, % and Lemma \ref{equivalent:Hausdorff},
the space $(\mc{K}_c(\R^d),\dist_H^\ell)$ is complete, so $X^*\in\mc{K}_c(\R^d)$.

e) If $F(X)\subset X$, then by induction, we have $F^{k+1}(X)\subset F^k(X)$ 
for all $k\in\N$, so the desired statement follows from part c).
The proof of the opposite inclusion is analogous.

f) We use part c) to compute
\begin{align*}
&\dist^\ell(X^*,0)
\le\dist^\ell(X^*,F(0))+\dist^\ell(F(0),0)\\
&=\dist^\ell(F(X^*),F(0))+\dist^\ell(V,0)
\le\ell\dist^\ell(X^*,0)+\|V\|_\ell,
\end{align*}
subtract $\ell\dist^\ell(X^*,0)$ and divide by $1-\ell$.
\end{proof}

\section{Approximation of $X^*$ via minimization} \label{AVOP}

For technical reasons, we will have to consider the situation when
$F$ is inflated by an $\eps$-ball.
This yields a perturbed mapping
\[F_\eps:\R^d\to\mc{K}_c(\R^d),\quad F_\eps(x):=Cx+B_\eps(V).\]

The following auxiliary result discusses the interplay between contraction, 
nested dynamics and overapproximation.

\begin{lemma} \label{nested:lemma}
Let $X\in\mc{K}(\R^d)$ with $F(X)\subset X$, and let $R>0$.
Then we have $F(B^\ell_{\ell^{-1}R}(X))\subset B^\ell_R(X)$.
The same statement holds for the mapping $F_\eps$.
\end{lemma}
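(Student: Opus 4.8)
The plan is to reduce the set inclusion to a pointwise estimate and then exploit the invariance hypothesis $F(X)\subset X$ to produce, for each point of the image, an explicit nearby witness inside $X$. Concretely, I would take an arbitrary $z\in F(B^\ell_{\ell^{-1}R}(X))$; since $F(y)=Cy+V$, every such $z$ has the form $z=Cy+v$ with $y\in B^\ell_{\ell^{-1}R}(X)$ and $v\in V$. The goal is then to show $\dist^\ell(z,X)\le R$, which is exactly the statement $z\in B^\ell_R(X)$.

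Because $X$ is compact, I would choose a point $x\in X$ realizing the distance $\dist^\ell(y,X)$, so that $\|y-x\|_\ell\le\ell^{-1}R$ by definition of $B^\ell_{\ell^{-1}R}(X)$. The decisive step — and what really drives the lemma — is to reuse the \emph{same} control value $v$: since $Cx+v\in F(x)\subset F(X)\subset X$, the point $Cx+v$ lies in $X$ and serves as a comparison point for $z$. The additive term then cancels, and the estimate follows at once from $\|C\|_\ell\le\ell$:
\[
\dist^\ell(z,X)\le\|z-(Cx+v)\|_\ell=\|C(y-x)\|_\ell\le\ell\|y-x\|_\ell\le R.
\]
As $z$ was arbitrary, this gives the inclusion for $F$.

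For $F_\eps$ I would simply observe that $F_\eps(x)=Cx+B_\eps(V)$ has the identical structure of $F$ with the fixed compact set $V$ replaced by the fixed compact set $B_\eps(V)$, and that the argument above uses nothing about the additive set beyond its being translated after the linear map $C$. Hence the same reasoning, with $v\in V$ replaced by $w\in B_\eps(V)$ and the invariance $F_\eps(X)\subset X$ in place of $F(X)\subset X$, yields the claim verbatim. There is no serious analytic obstacle here; the one point worth flagging is that pairing the identical control/perturbation term in both $z$ and the witness $Cx+v$ is precisely what cancels the additive part and isolates the contracting linear part $C(y-x)$. Recognizing that the hypothesis $F(X)\subset X$ is exactly what lands this witness back in $X$ is the conceptual heart of the proof.
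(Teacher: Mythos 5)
Your proof is correct and is essentially the paper's argument in pointwise form: the paper writes the same reasoning as a two-line semi-distance computation, bounding $\dist^\ell(F(B^\ell_{\ell^{-1}R}(X)),X)\le\dist^\ell(F(B^\ell_{\ell^{-1}R}(X)),F(X))\le\ell\dist^\ell(B^\ell_{\ell^{-1}R}(X),X)\le R$, where the first inequality encodes your use of the invariance $F(X)\subset X$ and the second (the Lipschitz estimate for the induced set map, Proposition~\ref{inf:reach}\,b)) encodes exactly your pairing of the same $v$ to cancel the additive term. The treatment of $F_\eps$ is identical in both.
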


\begin{proof}
This follows from the computation
\begin{align*}
&\dist^\ell(F(B^\ell_{\ell^{-1}R}(X)),X)
\le\dist^\ell(F(B^\ell_{\ell^{-1}R}(X)),F(X))\\
&\le\ell\dist^\ell(B^\ell_{\ell^{-1}R}(X),X)
\le R.
\end{align*}
Since $F_\eps$ shares all properties of $F$, the same proof applies to $F_\eps$.
\end{proof}

We construct an approximation to $X^*$ by solving an optimization problem, 
which uses the property proved in Proposition \ref{inf:reach} part e) 
as a constraint.

\begin{proposition}\label{vacuum:approximation}
The optimization problem
\begin{equation}\left.\begin{aligned} 
&\min_b\,\mathbbm{1}^Tb\quad\text{subject to}\quad b\in\mc{B}_{X^*}\\
&\mc{B}_{X^*}:=\{b\in\R^N:F(Q_{A,b})\subset Q_{A,b}\neq\emptyset\}
\end{aligned}\right\}\label{op1}\end{equation}
possesses a unique solution $b^*\in\R^N$. 
This $b^*$ is given by 
\[b^*_i=\inf_{b\in\mc{B}_{X^*}}b_i\quad\text{for}\quad i\in\{1,\ldots,N\}\]
and satisfies the error bound
\[X^*\subset Q_{A,b^*}\subset B^\ell_{\ell^{-1}R_A^{X^*}}(X^*)\]
with $R_A^{X^*}$ as in Lemma \ref{not:worse:than:ell}.
\end{proposition}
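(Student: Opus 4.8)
The plan is to first show that the feasible set $\mc{B}_{X^*}$ is nonempty by exhibiting a concrete forward-invariant polytope in $\mc{G}_A$, then to identify the componentwise infimum $b^*$ as a feasible point via the intersection formula of Lemma~\ref{intersections}, and finally to read off optimality, uniqueness and the error bound.

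For nonemptiness I would set $R:=R_A^{X^*}$ and $P:=\pi_{\mc{G}_A}(B_R^\ell(X^*))$. Since $F(X^*)=X^*$, part~e) of Proposition~\ref{inf:reach} applies with $X=X^*$, and the three auxiliary results chain together: Theorem~\ref{projector} gives $B_R^\ell(X^*)\subset P$; Lemma~\ref{not:worse:than:ell} gives $P\subset B_{\ell^{-1}R}^\ell(X^*)$; and monotonicity of $F$ (immediate from $F(X)=CX+V$) combined with Lemma~\ref{nested:lemma} applied to $X^*$ yields $F(P)\subset F(B_{\ell^{-1}R}^\ell(X^*))\subset B_R^\ell(X^*)\subset P$. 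Hence $P$ is a nonempty polytope with $F(P)\subset P$, so any $b$ with $Q_{A,b}=P$ lies in $\mc{B}_{X^*}$.

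Next I would note that every feasible $b$ satisfies $X^*\subset Q_{A,b}$ by part~e) of Proposition~\ref{inf:reach}; this simultaneously shows that $\cap_{b\in\mc{B}_{X^*}}Q_{A,b}$ contains $X^*$ (so the intersection is nonempty) and that $b_i\ge\sigma_{X^*}(a_i)$, making the infima $b^*_i$ finite. Lemma~\ref{intersections} then gives $Q_{A,b^*}=\cap_{b\in\mc{B}_{X^*}}Q_{A,b}$. Feasibility of $b^*$ follows because $Q_{A,b^*}\subset Q_{A,b}$ for every feasible $b$, so monotonicity of $F$ gives $F(Q_{A,b^*})\subset F(Q_{A,b})\subset Q_{A,b}$ for all such $b$, and intersecting over $b$ returns $F(Q_{A,b^*})\subset Q_{A,b^*}$; the polytope is nonempty since $X^*\subset Q_{A,b^*}$.

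Finally, because $b^*_i\le b_i$ componentwise for every feasible $b$, we obtain $\mathbbm{1}^Tb^*\le\mathbbm{1}^Tb$, so $b^*$ is optimal; uniqueness follows since equality of the sums together with $b^*_i\le b_i$ forces $b=b^*$ coordinatewise. For the error bound I would use that $Q_{A,b^*}$ is contained in the particular invariant polytope $P$ built above, whence $X^*\subset Q_{A,b^*}\subset P\subset B_{\ell^{-1}R_A^{X^*}}^\ell(X^*)$. I expect the main obstacle to be the nonemptiness step: the projector estimate, the contraction bound of Lemma~\ref{not:worse:than:ell} and the nested-dynamics lemma must be ordered so that $F$ maps $P$ back inside the very ball that $P$ was constructed to contain, and it is precisely here that the standing relation between $\ell$ and $\kappa_A$ in Assumption~\ref{kappaineqass} is genuinely used through the definition of $R_A^{X^*}$.
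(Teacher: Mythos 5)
Your proposal is correct and follows essentially the same route as the paper: nonemptiness of $\mc{B}_{X^*}$ via the projected inflated ball $\pi_{\mc{G}_A}(B_{R_A^{X^*}}^\ell(X^*))$ combined with Lemma~\ref{not:worse:than:ell}, Lemma~\ref{nested:lemma} and Theorem~\ref{projector}, then identification of $b^*$ through Lemma~\ref{intersections} and Proposition~\ref{inf:reach}~e), and the error bound from $Q_{A,b^*}\subset P\subset B^\ell_{\ell^{-1}R_A^{X^*}}(X^*)$. The only (immaterial) deviation is that you establish feasibility of $b^*$ by monotonicity of $F$ and intersecting over feasible $b$, where the paper runs a chain of Minkowski-sum inclusions $F(\cap_b Q_{A,b})\subset\cap_b F(Q_{A,b})\subset\cap_b Q_{A,b}$; both arguments are sound.
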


\begin{remark}
a) Note that Problem \eqref{op1} selects the smallest polytope
in the collection $\{Q_{A,b}:b\in\mc{B}_{X^*}\}$ with respect to inclusion.
The setup of the problem also guarantees that the vector $b^*$ is a particularly 
nice representation of the polytope $Q_{A,b^*}$, see Section 2.2 
of \cite{Rieger:17}.

b) The number $R_A^{X^*}$ is defined in Lemma \ref{not:worse:than:ell} and
bounded by the a-priori estimate in Proposition \ref{inf:reach} part f).

c) It is at this stage not obvious that Problem \eqref{op1} is a
disjunctive program.
This will be established in the next section.
\end{remark}

\begin{proof}[Proof of Proposition \ref{vacuum:approximation}]
We clearly have $\pi_{\mc{G}_A}(B_{R_A^{X^*}}^\ell(X^*))\neq\emptyset$.
Lemma \ref{not:worse:than:ell} implies
\[\pi_{\mc{G}_A}(B_{R_A^{X^*}}^\ell(X^*))\subset B^\ell_{\ell^{-1}R_A^{X^*}}(X^*),\]
and by Proposition \ref{inf:reach} part c), Lemma \ref{nested:lemma} 
and Theorem \ref{projector}, we have
\[F(\pi_{\mc{G}_A}(B_{R_A^{X^*}}^\ell(X^*)))
\subset F(B^\ell_{\ell^{-1}{R_A^{X^*}}}(X^*))
\subset B_{R_A^{X^*}}^\ell(X^*)
\subset\pi_{\mc{G}_A}(B_{R_A^{X^*}}^\ell(X^*)).\]
In particular, we find
\[\pi_{\mc{G}_A}(B_{R_A^{X^*}}^\ell(X^*))\in\{Q_{A,b}:b\in\mc{B}_{X^*}\},\]
so $\mc{B}_{X^*}\neq\emptyset$.
According to Proposition \ref{inf:reach} part e), we have
$X^*\subset(\cap_{b\in\mc{B}_{X^*}}Q_{A,b})$,
so by Lemma \ref{intersections}, the vector $b^*\in\R^N$ given by 
$b^*_i:=\inf_{b\in\mc{B}}b_i$ satisfies
\[X^*\subset(\cap_{b\in\mc{B}_{X^*}}Q_{A,b})=Q_{A,b^*}.\]
From the definition of $\mc{B}_{X^*}$, we obtain
\begin{align*}
&F(Q_{A,b^*})
=F(\cap_{b\in\mc{B}_{X^*}}Q_{A,b})
=C(\cap_{b\in\mc{B}_{X^*}}Q_{A,b})+V
\subset(\cap_{b\in\mc{B}_{X^*}}(CQ_{A,b})+V)\\
&\subset(\cap_{b\in\mc{B}_{X^*}}(CQ_{A,b}+V))
=\cap_{b\in\mc{B}_{X^*}}F(Q_{A,b})
\subset(\cap_{b\in\mc{B}_{X^*}} Q_{A,b})
=Q_{A,b^*},
\end{align*}
so we have $b^*\in\mc{B}_{X^*}$ as well.
By the above and by Lemma \ref{not:worse:than:ell}, we conclude 
$b^*=\argmin_{b\in\mc{B}_{X^*}}\mathbbm{1}^Tb$ and
\[X^*\subset Q_{A,b^*}\subset\pi_{\mc{G}_A}(B_{R_A^{X^*}}^\ell(X^*))
\subset B^\ell_{\ell^{-1}R_A^{X^*}}(X^*).\]
\end{proof}

The unique minimizers of the perturbed problems approximate
the unique minimizer of the original problem.

\begin{proposition}\label{eps:converge}
For any $\eps>0$, the optimization problem
\begin{equation}\left.\begin{aligned}
&\min_b\,\mathbbm{1}^Tb\quad\text{subject to}\quad b\in\mc{B}_{X^*_\eps}\\
&\mc{B}_{X^*_\eps}:=\{b\in\R^N:F_\eps(Q_{A,b})\subset Q_{A,b}\neq\emptyset\}
\end{aligned}\right\}\label{op1eps}\end{equation}
possesses a unique solution $b^*_\eps\in\R^N$.
This $b^*_\eps$ is given by 
\[b^*_{\eps,i}=\inf_{b\in\mc{B}_{X^*_\eps}}
b_i\quad\text{for}\quad i\in\{1,\ldots,N\},\]
and we have
\[\lim_{\eps\searrow 0}b_\eps^*=b^*.\]
\end{proposition}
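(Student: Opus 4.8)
My plan splits the statement into three parts: the existence, uniqueness and explicit form of $b^*_\eps$; the (easy) inequality $\bar b\ge b^*$ together with a reformulation of the feasible sets; and the (hard) reverse inequality $\bar b\le b^*$.

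The first part I expect to come essentially for free. The map $F_\eps$ is of the form treated in Section \ref{ISP} with the convex body $V$ replaced by $B_\eps(V)\in\mc{K}_c(\R^d)$, so the auxiliary results of Sections \ref{ISP}--\ref{AVOP} apply to $F_\eps$: Proposition \ref{inf:reach} supplies a unique fixed set $X^*_\eps=F_\eps(X^*_\eps)$, and Lemma \ref{nested:lemma}, Lemma \ref{not:worse:than:ell} and Theorem \ref{projector} are available for $F_\eps$. Consequently the entire proof of Proposition \ref{vacuum:approximation} transfers word for word with $(F,X^*)$ replaced by $(F_\eps,X^*_\eps)$, giving the uniqueness of $b^*_\eps$ and the formula $b^*_{\eps,i}=\inf_{b\in\mc{B}_{X^*_\eps}}b_i$. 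I would also record the elementary estimate $\dist_H^\ell(F_\eps(X),F(X))\le c_{2,\ell}\eps$ for all $X$, which via the standard contraction argument yields $\dist_H^\ell(X^*_\eps,X^*)\le c_{2,\ell}\eps/(1-\ell)$ and in particular $X^*_\eps\to X^*$ as $\eps\searrow0$.

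The key to the convergence is a reformulation of the feasible sets. Writing $\Phi_j(b):=\sigma_{F(Q_{A,b})}(a_j)-b_j=\sigma_{Q_{A,b}}(C^Ta_j)+\sigma_V(a_j)-b_j$ and using $\|a_j\|_2=1$, the support function of the $\eps$-inflated image in direction $a_j$ satisfies $\sigma_{F_\eps(Q_{A,b})}(a_j)=\sigma_{F(Q_{A,b})}(a_j)+\eps$, so that
\[\mc{B}_{X^*_\eps}=\{b\in\R^N:\Phi_j(b)\le-\eps\ \text{for}\ j=1,\dots,N,\ Q_{A,b}\neq\emptyset\}.\]
Each $\Phi_j$ is \emph{concave} in $b$, since $b\mapsto\sigma_{Q_{A,b}}(p)=\min\{b^Ty:A^Ty=p,\ y\ge0\}$ is a minimum of linear functionals. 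Because $B_{\eps'}(V)\subset B_\eps(V)$ for $\eps'<\eps$, the sets $\mc{B}_{X^*_\eps}$ increase as $\eps\searrow0$ and are contained in $\mc{B}_{X^*}$; hence $b^*_\eps$ is monotonically decreasing and bounded below by $b^*$, so $\bar b:=\lim_{\eps\searrow0}b^*_\eps$ exists and $\bar b\ge b^*$. Interchanging the two infima and using $\bigcup_{\eps>0}\{\Phi_j\le-\eps\}=\{\Phi_j<0\}$, I obtain the clean identities
\[\bar b_i=\inf\{b_i:\Phi_j(b)<0\ \forall j,\ Q_{A,b}\neq\emptyset\},\qquad b^*_i=\inf\{b_i:\Phi_j(b)\le0\ \forall j,\ Q_{A,b}\neq\emptyset\}.\]
Thus the proposition reduces to showing that the infimum of $b_i$ over the \emph{strictly} invariant representations equals the infimum over all invariant representations. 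To close the gap I would perturb the minimizer $b^*$ into the strict region: with $\mc{A}:=\{j:\Phi_j(b^*)=0\}$ the active facets and $y^{(j)}\ge0$, $A^Ty^{(j)}=C^Ta_j$ an optimal multiplier (so $y^{(j)}-e_j$ is a supergradient of $\Phi_j$ at $b^*$), a direction $v$ with $(y^{(j)})^Tv<v_j$ for all $j\in\mc{A}$ gives, by concavity, $\Phi_j(b^*+tv)\le\Phi_j(b^*)+t\,(y^{(j)}-e_j)^Tv<0$ for small $t>0$, while inactive constraints stay negative by continuity. Letting $t\searrow0$ produces strictly feasible points with $i$-th coordinate $b^*_i+tv_i\to b^*_i$, whence $\bar b_i\le b^*_i$ and $\bar b=b^*$.

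The main obstacle is exactly the construction of this strictly feasible direction $v$, i.e.\ a Mangasarian--Fromovitz-type constraint qualification for the concave (and, as the authors note, disjunctive) system $\{\Phi_j\le0\}$ at $b^*$. This is where Assumption \ref{kappaineqass} must enter: it has to bound the active multipliers $y^{(j)}$ so that, for instance, $v=\mathbbm{1}$ (or a direction adapted to $\|\cdot\|_\ell$) satisfies $(y^{(j)})^Tv<v_j$. The difficulty is genuine. A naive uniform inflation $b^*+t\mathbbm{1}$ need not be invariant when some $\|y^{(j)}\|_1\ge1$, and although the $\|\cdot\|_\ell$-ball inflation $B_\delta^\ell(Q_{A,b^*})$ \emph{is} strictly $F$-invariant (by Lemma \ref{nested:lemma}, $F(B_\delta^\ell(Q_{A,b^*}))\subset B_{\ell\delta}^\ell(Q_{A,b^*})$), it is not a member of $\mc{G}_A$, and projecting it back via $\pi_{\mc{G}_A}$ reintroduces the approximation error of order $\kappa_A\|X^*\|_\ell$, which does not vanish as $\delta\searrow0$. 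The argument must therefore stay at the level of the linear-programming multipliers, controlling $\|y^{(j)}\|_1$ through the density of the rows of $A$ encoded in $\kappa_A$, rather than through any set-inflation that passes through the projection.
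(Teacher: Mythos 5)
Your first half coincides with the paper's proof: since $B_\eps(V)\in\mc{K}_c(\R^d)$, the map $F_\eps$ satisfies every hypothesis used in Sections \ref{ISP}--\ref{AVOP}, so Proposition \ref{vacuum:approximation} applied to $F_\eps$ gives existence, uniqueness and the formula $b^*_{\eps,i}=\inf_{b\in\mc{B}_{X^*_\eps}}b_i$; this is exactly the paper's first step. Your reduction of the convergence claim is also correct, and it is a different framing from the paper's: because $\|a_j\|_2=1$ yields $\sigma_{B_\eps(V)}(a_j)=\sigma_V(a_j)+\eps$, the sets $\mc{B}_{X^*_\eps}$ increase, as $\eps\searrow 0$, to the set of strictly invariant representations $\{b:\Phi_j(b)<0\ \forall j,\ Q_{A,b}\neq\emptyset\}$, so $\bar b=\lim_{\eps\searrow 0}b^*_\eps$ exists, $\bar b\ge b^*$, and the proposition is equivalent to approximating $b^*$ by strictly invariant vectors. (Two details: by Proposition \ref{locmin} with $\eps=0$ \emph{all} $N$ constraints are active at $b^*$, so your active set is $\{1,\ldots,N\}$ and there is no ``inactive by continuity'' case; and you should require $v>0$ so that $Q_{A,b^*+tv}\supset Q_{A,b^*}\neq\emptyset$, which the duality formula behind $\Phi_j$ needs.)

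Nevertheless the proposal has a genuine gap, which you acknowledge: the existence of a direction $v$ with $(y^{(j)})^Tv<v_j$ for suitable optimal multipliers $y^{(j)}\in P_j$ --- a Slater-type constraint qualification at $b^*$ --- is precisely the hard content of the proposition, and it is not established. Without it nothing excludes $\bar b\neq b^*$: in your own notation both $b^*$ and $\bar b$ are fixed points of the monotone concave map $b\mapsto\bigl(\sigma_{Q_{A,b}}(C^Ta_i)+\sigma_V(a_i)\bigr)_{i=1}^N$, and the contraction property of $F$ does not transfer to $\pi_{\mc{G}_A}\circ F$, because the overapproximation error in Theorem \ref{projector} scales with the size $\|X\|$ of the set rather than with distances between sets. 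So what you have proved is only $b^*\le\bar b$ plus a correct reformulation of what remains.

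For comparison, the paper closes the argument by exactly the route your last paragraph rejects: it sets $R_1(\eps)=\tfrac{c_{2,\ell}\ell\eps}{1-\ell}$ and $X(\eps)=B^\ell_{\ell^{-1}R_1}(Q_{A,b^*})$, verifies $F_\eps(X(\eps))\subset X(\eps)$, shows $\pi_{\mc{G}_A}(B^\ell_{R_2}(X(\eps)))$ with $R_2=R_A^{X(\eps)}$ belongs to $\{Q_{A,b}:b\in\mc{B}_{X^*_\eps}\}$, and concludes $Q_{A,b^*}\subset Q_{A,b^*_\eps}\subset\pi_{\mc{G}_A}(B^\ell_{R_2}(X(\eps)))$, hence $\dist^\ell(Q_{A,b^*_\eps},Q_{A,b^*})\le L_A\bigl(R_2(\eps)+\ell^{-1}R_1(\eps)\bigr)$, where $L_A$ is the Lipschitz constant of $\pi_{\mc{G}_A}$. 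You should know that your objection to this construction is on target: $R_2(\eps)=R_A^{X(\eps)}$ is proportional to $\|X(\eps)\|_\ell\ge\|Q_{A,b^*}\|_\ell$, so $R_2(\eps)$ converges to $R_A^{Q_{A,b^*}}>0$ (unless $Q_{A,b^*}=\{0\}$), the paper's bound does not tend to zero, and the concluding limit statement in the paper does not follow from its displayed inequality. In short, you correctly identified the obstruction --- the projection error of order $\kappa_A\|Q_{A,b^*}\|_\ell$ surviving as $\eps\searrow 0$ --- but you did not overcome it, and neither, as written, does the paper; completing the proof requires either your multiplier-based constraint qualification (presumably exploiting Assumption \ref{kappaineqass}) or an $F_\eps$-invariant polytope construction whose distance to $Q_{A,b^*}$ genuinely vanishes with $\eps$.
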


\begin{proof}
Since $F_\eps$ shares all properties of $F$ for every $\eps>0$, we can apply 
Proposition \ref{vacuum:approximation} to the mapping $F_\eps$
to obtain existence and uniqueness of the solutions $b^*_\eps$.
It remains to show the convergence statement.

The inclusion
$F(Q_{A,b_\eps^*})\subset F_\eps(Q_{A,b_\eps^*})\subset Q_{A,b_\eps^*}$
implies $b_\eps^*\in\mc{B}_{X^*}$, so $b^*\le b_\eps^*$ holds by 
Proposition \ref{vacuum:approximation}, and hence 
$Q_{A,b^*}\subset Q_{A,b_\eps^*}$.
Let $R_1=R_1(\eps):=\frac{c_{2,\ell}\ell\eps}{1-\ell}$ and define
$X(\eps):=B^\ell_{\ell^{-1}R_1}(Q_{A,b^*})$. 
Lemma \ref{nested:lemma} gives
\begin{align*}
&F_\eps(X(\eps))=F_\eps(B^\ell_{\ell^{-1}R_1}(Q_{A,b^*}))
=F(B^\ell_{\ell^{-1}R_1}(Q_{A,b^*}))+B_\eps(0)\\
&\subset B_{R_1}^\ell(Q_{A,b^*})+B_{c_{2,\ell}\eps}^\ell(0)
=B_{R_1+c_{2,\ell}\eps}^\ell(Q_{A,b^*})
\subset B^\ell_{\ell^{-1}R_1}(Q_{A,b^*})
=X(\eps).
\end{align*}
Now let $R_2=R_2(\eps):=R_A^{X(\eps)}$ with notation as in 
Lemma \ref{not:worse:than:ell}.
Using Lemma \ref{not:worse:than:ell}, Lemma \ref{nested:lemma}
and Theorem \ref{projector}, we obtain
\begin{align*}
F_\eps(\pi_{\mc{G}_A}(B_{R_2}^\ell(X(\eps))))
\subset F_\eps(B_{\ell^{-1}R_2}^\ell(X(\eps)))
\subset B_{R_2}^\ell(X(\eps))
\subset\pi_{\mc{G}_A}(B_{R_2}^\ell(X(\eps))),
\end{align*}
so $\pi_{\mc{G}_A}(B_{R_2}^\ell(X(\eps)))\in\{Q_{A,b}:b\in\mc{B}_{X^*_\eps}\}$,
and by minimality of $b_\eps^*$, we have
\[Q_{A,b^*}\subset Q_{A,b^*_\eps}\subset\pi_{\mc{G}_A}(B_{R_2}^\ell(X(\eps))).\]
Let $L_A>0$ be the Lipschitz constant of the mapping $\pi_{\mc{G}_A}$.
By the above, and since $Q_{A,b^*}\in\mc{G}_A$, we obtain
\begin{align*}
&\dist^\ell(Q_{A,b^*_\eps},Q_{A,b^*})
\le\dist^\ell(\pi_{\mc{G}_A}(B_{R_2}^\ell(X(\eps))),Q_{A,b^*})\\
&=\dist^\ell(\pi_{\mc{G}_A}(B_{R_2}^\ell(X(\eps))),\pi_{\mc{G}_A}(Q_{A,b^*}))
\le L_A\dist^\ell(B_{R_2}^\ell(X(\eps)),Q_{A,b^*}),
\end{align*}
which implies
\begin{align*}
&\lim_{\eps\searrow 0}\dist^\ell(Q_{A,b^*_\eps},Q_{A,b^*})=0
\end{align*}
and hence the desired convergence statement.
\end{proof}

\section{Disjunctive programs} \label{CC}

We assume that the values $\sigma_V(a_i)$ for $i\in\{1,\ldots,N\}$ 
of the support function of the sets $V$ are available.
This is not a strong requirement, because in many applications, 
the set $V$ has a very simple shape.
We use the notation
\begin{align*}
&P_0:=\{p\in\R^N:A^Tp=0,\ \mathbbm{1}^Tp=1,\ p\ge 0\},\\
&P_i:=\{p\in\R^N,\ A^Tp=C^Ta_i,\ p\ge 0\},\quad i\in\{1,\ldots,N\},
\end{align*}
and we develop representation of the sets $\mc{B}_{X^*_\eps}$
which is accessible to linear optimization techniques.
If $\eps=0$, then $F_\eps=F$, $B_\eps(V)=V$ and $\mc{B}_{X^*_\eps}=\mc{B}_{X^*}$.

\begin{proposition} \label{lpc}
Consider an arbitrary vector $b\in\R^N$ and $\eps\ge 0$.
\begin{itemize}
\item [a)] For $X\in\mc{K}(\R^d)$, the inclusion 
$X\subset Q_{A,b}$ holds if and only if we have $\sigma_{X}(a_i)\le b_i$ 
for all $i\in\{1,\ldots,N\}$.
\item [b)] The following statements are equivalent:
\begin{itemize}
\item [i)] $Q_{A,b}\neq\emptyset$;
\item [ii)] $p^Tb\ge 0$ for all $p\in\R^N$ with $A^Tp=0$ and $p\ge 0$;
\item [iii)] $p^Tb\ge 0$ for all $p\in P_0$.
\item [iv)] $p^Tb\ge 0$ for all $p\in\ext(P_0)$.
\end{itemize}
\item [c)] If we have $Q_{A,b}\neq\emptyset$, then the following statements
are equivalent:
\begin{itemize}
\item [i)] $F_\eps(Q_{A,b})\subset Q_{A,b}$;
\item [ii)] $\max\{(C^Ta_i)^Tx:x\in Q_{A,b}\}\le b_i-\sigma_{B_\eps(V)}(a_i)$
$\forall\,i\in\{1,\ldots,N\}$;
\item [iii)] $\min\{(p-e_i)^Tb:p\in P_i\}\le-\sigma_{B_\eps(V)}(a_i)$ 
$\forall\,i\in\{1,\ldots,N\}$;
\item [iv)] $\min\{(p-e_i)^Tb:p\in\ext(P_i)\}\le-\sigma_{B_\eps(V)}(a_i)$
$\forall\,i\in\{1,\ldots,N\}$.
\end{itemize}
\end{itemize}
\end{proposition}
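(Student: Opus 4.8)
The plan is to handle the three parts in order, reducing each chain of equivalences to the definition of the support function, a theorem of the alternative, or linear programming duality. Part a) I would prove directly: the inclusion $X\subset Q_{A,b}$ says that every $x\in X$ satisfies $a_i^Tx\le b_i$ for all $i$, which by compactness of $X$ is the same as $\max_{x\in X}a_i^Tx=\sigma_X(a_i)\le b_i$ for all $i$. No convexity is needed here, only attainment of the maximum defining $\sigma_X$.

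For part b), the equivalence (i)$\Leftrightarrow$(ii) is exactly Farkas' lemma in Gale's form: the system $Ax\le b$ is solvable if and only if $b^Tp\ge 0$ for every $p\ge 0$ with $A^Tp=0$. The step (ii)$\Leftrightarrow$(iii) exploits that the constraint set in (ii) is a cone: the origin gives the trivial inequality, while any nonzero feasible $p$ has $\mathbbm{1}^Tp>0$, so its normalization $p/(\mathbbm{1}^Tp)$ lies in $P_0$ and the sign of $p^Tb$ is unchanged under positive scaling. Finally (iii)$\Leftrightarrow$(iv) holds because $P_0$ sits inside the unit simplex and is therefore a polytope, so the linear functional $p\mapsto p^Tb$ attains its minimum over $P_0$ at an extreme point; nonnegativity on $P_0$ and on $\ext(P_0)$ thus coincide (both vacuous when $P_0=\emptyset$).

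For part c), the first move is to apply part a) to the compact convex set $F_\eps(Q_{A,b})=CQ_{A,b}+B_\eps(V)$, which gives that (i) holds if and only if $\sigma_{F_\eps(Q_{A,b})}(a_i)\le b_i$ for all $i$. Additivity of the support function over Minkowski sums together with $\sigma_{CQ_{A,b}}(a_i)=\max_{x\in Q_{A,b}}(C^Ta_i)^Tx$ turns this into (ii). The equivalence (ii)$\Leftrightarrow$(iii) is LP duality: the primal $\max\{(C^Ta_i)^Tx:Ax\le b\}$ is dual to $\min\{b^Tp:A^Tp=C^Ta_i,\ p\ge 0\}=\min\{b^Tp:p\in P_i\}$, and subtracting the constant $b_i=e_i^Tb$ rewrites the dual objective as $(p-e_i)^Tb$. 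The reduction (iii)$\Leftrightarrow$(iv) once more replaces the minimum over $P_i$ by the minimum over $\ext(P_i)$.

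I expect the duality step in part c) to be the main obstacle, for two reasons. First, strong duality needs the primal to be feasible and bounded: feasibility is the standing hypothesis $Q_{A,b}\neq\emptyset$ of part c), and boundedness follows because under Assumption \ref{A:ass} every nonempty $Q_{A,b}$ is a bounded polytope, so the primal value is finite; strong duality then forces $P_i\neq\emptyset$ with the dual optimum attained. Second, in contrast to $P_0$, the polyhedron $P_i$ is typically unbounded, so the passage to extreme points in (iv) is not immediate. Here I would use that $P_i$ lies in the nonnegative orthant, hence is line-free and possesses extreme points, and that a linear functional bounded below on a line-free polyhedron attains its minimum at a vertex. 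Since the dual value is finite, the minimum over $P_i$ equals the minimum over the finite set $\ext(P_i)$, so (iii) and (iv) are literally the same condition.
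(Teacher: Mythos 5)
Your proposal is correct and follows essentially the same route as the paper: part a) directly, part b) via the Farkas lemma, a normalization argument, and compactness of $P_0$, and part c) by rewriting the invariance condition through support functions and then invoking LP strong duality together with attainment at extreme points of $P_i$. The only differences are cosmetic: the paper checks c) i)$\Leftrightarrow$ii) by an elementwise computation rather than Minkowski additivity of support functions, and it delegates finiteness and extreme-point attainment for the dual over $P_i$ to the cited strong duality theorem, where you spell out the line-freeness argument explicitly.
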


\begin{proof}
Statement a) is obvious.

b) The equivalence between i) and ii) is the version of the Farkas lemma 
given in Proposition 1.7 of \cite{Ziegler}.
Elementary arguments show that statement ii) is equivalent with statement iii).
Since the set $P_0$ is a compact polytope, statement iii) is equivalent 
with statement iv).

c) We have $F_\eps(Q_{A,b})\subset Q_{A,b}$ if and only if 
\[a_i^T(Cx+v)\le b_i\quad\forall\,x\in Q_{A,b},\ \forall\,v\in B_\eps(V),\
\forall\,i\in\{1,\ldots,N\},\]
which can be rewritten as
\[(C^Ta_i)^Tx\le b_i-a_i^Tv\quad\forall\,x\in Q_{A,b},\ \forall\,v\in B_\eps(V),\
\forall\,i\in\{1,\ldots,N\}.\]
This establishes the equivalence of statements i) and ii).
Since $Q_{A,b}$ is nonempty and bounded, the strong duality theorem 
for linear programming as presented in Theorem 4.13 of \cite{Theobald:13}
guarantees that 
\begin{align*}
&\max\{(C^Ta_i)^Tx:x\in Q_{A,b}\}=\min\{p^Tb:p\in P_i\}
=\min\{p^Tb:p\in\ext(P_i)\}
\end{align*}
is finite.
Hence statement ii) is equivalent with statements iii) and iv).
\end{proof}

We state Problems \eqref{op1} and \eqref{op1eps} as a disjunctive programs 
to highlight their structural properties.

\begin{theorem}
Problems \eqref{op1} and \eqref{op1eps} are equivalent with the problem
\begin{equation}\label{LP:ext:1}
\left.\begin{aligned}
&\min_b\,\mathbbm{1}^Tb&&\\
&\text{subject to}&0&\le p^Tb\quad\forall\,p\in P_0\\
&&\min\{(p-e_1)^Tb:p\in\ext(P_1)\}&\le-\sigma_{B_\eps(V)}(a_1),\\
&&&\vdotswithin{\le}\\
&&\min\{(p-e_N)^Tb:p\in\ext(P_N)\}&\le-\sigma_{B_\eps(V)}(a_N)
\end{aligned}\right\}
\end{equation}
with $\eps=0$ in the case of Problem \eqref{op1}.
\end{theorem}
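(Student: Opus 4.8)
The plan is to show that the feasible set of the disjunctive program \eqref{LP:ext:1} is exactly $\mc{B}_{X^*_\eps}$ (and $\mc{B}_{X^*}$ when $\eps=0$), so that the two problems share an objective $\mathbbm{1}^Tb$ over the same admissible set and are therefore equivalent. The whole argument is essentially a repackaging of Proposition~\ref{lpc}: the defining condition $F_\eps(Q_{A,b})\subset Q_{A,b}\neq\emptyset$ splits into a nonemptiness constraint and a nestedness constraint, and each of these is translated into the linear constraints appearing in \eqref{LP:ext:1}.

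Concretely, I would first recall that by definition $b\in\mc{B}_{X^*_\eps}$ means $Q_{A,b}\neq\emptyset$ and $F_\eps(Q_{A,b})\subset Q_{A,b}$. The nonemptiness part is handled by Proposition~\ref{lpc}~b): the equivalence i)$\Leftrightarrow$iii) says $Q_{A,b}\neq\emptyset$ iff $p^Tb\ge 0$ for all $p\in P_0$, which is precisely the first constraint block of \eqref{LP:ext:1}. (One could equally invoke iv) with $\ext(P_0)$, but stating it over all of $P_0$ is harmless since the constraint is linear and $P_0$ is the convex hull of its extreme points.) For the nestedness part, I would invoke Proposition~\ref{lpc}~c), which is stated under the standing hypothesis $Q_{A,b}\neq\emptyset$: its equivalence i)$\Leftrightarrow$iv) says that, given nonemptiness, $F_\eps(Q_{A,b})\subset Q_{A,b}$ holds iff
\[
\min\{(p-e_i)^Tb:p\in\ext(P_i)\}\le-\sigma_{B_\eps(V)}(a_i)\quad\forall\,i\in\{1,\ldots,N\},
\]
which is exactly the remaining $N$ constraints of \eqref{LP:ext:1}. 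Conjoining the two translations shows that $b$ is feasible for \eqref{LP:ext:1} if and only if $b\in\mc{B}_{X^*_\eps}$.

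Having matched the feasible sets, the equivalence of the optimization problems is immediate, since both minimize the same linear functional $\mathbbm{1}^Tb$; in particular the unique minimizer $b^*_\eps$ guaranteed by Proposition~\ref{eps:converge} (resp.\ $b^*$ by Proposition~\ref{vacuum:approximation}) is the unique optimizer of \eqref{LP:ext:1}. The case $\eps=0$ is covered by the remark preceding the statement that $F_0=F$, $B_0(V)=V$, and $\mc{B}_{X^*_0}=\mc{B}_{X^*}$, so that \eqref{LP:ext:1} with $\eps=0$ reduces to Problem~\eqref{op1}.

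I do not expect any genuine obstacle here, as the theorem is a bookkeeping consolidation of Proposition~\ref{lpc}; the only point that requires a little care is the logical structure of part~c), which is conditional on $Q_{A,b}\neq\emptyset$. One must verify that feasibility in \eqref{LP:ext:1} simultaneously enforces nonemptiness (via the $P_0$ block) so that the conditional equivalence c)~i)$\Leftrightarrow$iv) may legitimately be applied to the $P_i$ blocks; in other words, the two constraint blocks are not independent, and the second block is only meaningful once the first is satisfied. Making this dependency explicit, rather than treating the $N+1$ constraint groups as a flat conjunction, is the one subtlety worth spelling out in the write-up.
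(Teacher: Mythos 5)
Your proof is correct and takes essentially the same route as the paper, which states this theorem without a separate proof precisely because it is the direct consolidation of Proposition~\ref{lpc}: part~b) (i$\Leftrightarrow$iii) handles the nonemptiness block over $P_0$, and part~c) (i$\Leftrightarrow$iv) handles the invariance blocks over $\ext(P_i)$, so the feasible set of \eqref{LP:ext:1} is exactly $\mc{B}_{X^*_\eps}$ (resp.\ $\mc{B}_{X^*}$ when $\eps=0$). Your remark about the conditional nature of part~c) --- that the $\ext(P_i)$ constraints are only equivalent to invariance once the $P_0$ block has secured $Q_{A,b}\neq\emptyset$ --- is exactly the one subtlety worth making explicit.
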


Disjunctive programs are, in general, hard to solve, see \cite{Balas}.
To compute the desired solution $b^*$, we will construct a dual-type
problem that is just an ordinary linear program.

\section{Perturbed dual LPs}

In the following, we will formulate a linear program, which is related
to the dual of Problem \eqref{LP:ext:1} with $\eps>0$ in the sense of \cite{Balas}.

\begin{remark}
It is, in general, not possible to compute the solution $b^*$ of 
Problem \ref{LP:ext:1} with $\eps=0$ by following \cite{Balas} directly:

a) If $\interior(X^*)\neq\emptyset$, then $\interior(Q_{A,b^*})\neq\emptyset$,
which implies $p^Tb^*>0$ for all $p\in P_0$ by Proposition 36 in \cite{Rieger:17}.
The dual as defined in \cite{Balas} and similar works involves a constraint
of type $p^Tb\le 0$ for some $p\in P_0$, which means that $b^*$ is dual infeasible 
in this setting.

b) The dual problem from \cite{Balas} may have more than one maximizer, 
so it is not obvious how to recover $b^*$ from a dual solution.
\end{remark}

These facts motivate us to construct a dual problem following the general idea from
\cite{Balas}, but omitting the constraints $p^Tb\ge 0$ for all $p\in P_0$, and to
use a perturbation argument to show uniqueness of the dual maximizer.

\begin{proposition}\label{locmin}
For any $\eps>0$, the global minimizer $b^*_\eps\in\R^N$ 
of Problem \eqref{op1eps} satisfies
\begin{align}
&\max\{(C^Ta_i)^Tx:x\in Q_{A,b^*_\eps}\}=b_{\eps,i}^*-\sigma_{B_\eps(V)}(a_i)\quad
\forall\,i\in\{1,\ldots,N\},\label{primal:eq}\\
&\min\{(p-e_i)^Tb^*_\eps:p\in\ext(P_i)\}=-\sigma_{B_\eps(V)}(a_i)\quad\forall\,i\in\{1,\ldots,N\},
\label{dual:eq}
\end{align}
and the global minimizer $b^*\in\R^N$ of Problem \eqref{op1} satisfies
\[\min\{(p-e_i)^Tb^*:p\in\ext(P_i)\}=-\sigma_{V}(a_i)\quad\forall\,i\in\{1,\ldots,N\}.\]
In particular, we have $b^*_\eps\in\Omega_\eps$ and $b^*\in\Omega$, where
\begin{align*}
&\Omega_\eps:=\{b\in\R^N:(e_i-p)^Tb\le\sigma_{B_\eps(V)}(a_i)\ \forall\,p\in\ext(P_i),\ i\in\{1,\ldots,N\}\},\\
&\Omega:=\{b\in\R^N:(e_i-p)^Tb\le\sigma_{V}(a_i)\ \forall\,p\in\ext(P_i),\ i\in\{1,\ldots,N\}\}.
\end{align*}
\end{proposition}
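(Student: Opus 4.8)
The plan is to first establish the primal equalities \eqref{primal:eq} for the inflated minimizer $b^*_\eps$, then derive the dual equalities \eqref{dual:eq} by strong duality, obtain the statement for $b^*$ by passing to the limit $\eps\searrow 0$, and finally read off membership in $\Omega_\eps$ and $\Omega$.

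First I would prove \eqref{primal:eq}. By Proposition \ref{vacuum:approximation} applied to $F_\eps$, we know $b^*_\eps\in\mc{B}_{X^*_\eps}$ and $b^*_{\eps,i}=\inf_{b\in\mc{B}_{X^*_\eps}}b_i$; in particular, Proposition \ref{lpc} c) gives the inequalities $\max\{(C^Ta_i)^Tx:x\in Q_{A,b^*_\eps}\}\le b^*_{\eps,i}-\sigma_{B_\eps(V)}(a_i)$ for every $i$. Suppose for contradiction that one of these is strict for some index $j$, with gap $\gamma>0$. I would then perturb only the $j$-th component, setting $\tilde b_i:=b^*_{\eps,i}$ for $i\ne j$ and $\tilde b_j:=b^*_{\eps,j}-\delta$ for a small $\delta>0$, so that $Q_{A,\tilde b}\subset Q_{A,b^*_\eps}$. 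Monotonicity of the inner maxima in the polytope preserves all inequalities for $i\ne j$ (the left-hand sides can only shrink while the right-hand sides are unchanged), while the gap $\gamma$ absorbs the loss in the $j$-th inequality as soon as $\delta\le\gamma$. Combined with nonemptiness of $Q_{A,\tilde b}$, Proposition \ref{lpc} c) then yields $\tilde b\in\mc{B}_{X^*_\eps}$ with $\tilde b_j<b^*_{\eps,j}$, contradicting the infimum characterization of $b^*_{\eps,j}$. This forces equality, i.e.\ \eqref{primal:eq}.

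The crux, and the one place where the inflation is genuinely used, is guaranteeing $Q_{A,\tilde b}\ne\emptyset$. Here I would exploit that $X^*_\eps=C X^*_\eps+B_\eps(V)$ contains a translate of $B_\eps(V)$ and hence has nonempty interior; since $X^*_\eps\subset Q_{A,b^*_\eps}$ by the error bound in Proposition \ref{vacuum:approximation}, any $x_0\in\interior(X^*_\eps)$ satisfies $a_i^Tx_0<b^*_{\eps,i}$ for all $i$, so $x_0\in Q_{A,\tilde b}$ once $\delta$ is smaller than the slack $b^*_{\eps,j}-a_j^Tx_0$. Taking $\delta:=\min\{\gamma,\,b^*_{\eps,j}-a_j^Tx_0\}/2$ then closes the argument. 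I expect this nonemptiness step to be the main obstacle, and it is exactly the step that fails for the uninflated problem, since $X^*$ may be lower-dimensional and $Q_{A,b^*}$ may have empty interior, which is why the $b^*$ statement must be handled by a limiting argument.

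Given \eqref{primal:eq}, the dual equalities \eqref{dual:eq} follow immediately from the strong duality identity $\max\{(C^Ta_i)^Tx:x\in Q_{A,b^*_\eps}\}=\min\{p^Tb^*_\eps:p\in\ext(P_i)\}$ established in the proof of Proposition \ref{lpc} c): subtracting $b^*_{\eps,i}=e_i^Tb^*_\eps$ from both sides of \eqref{primal:eq} gives $\min\{(p-e_i)^Tb^*_\eps:p\in\ext(P_i)\}=-\sigma_{B_\eps(V)}(a_i)$. For the uninflated minimizer I would then pass to the limit: by Proposition \ref{eps:converge} we have $b^*_\eps\to b^*$, the map $b\mapsto\min\{(p-e_i)^Tb:p\in\ext(P_i)\}$ is continuous as a minimum over the finite vertex set $\ext(P_i)$ of affine functions of $b$, and $\sigma_{B_\eps(V)}(a_i)=\sigma_V(a_i)+\eps\to\sigma_V(a_i)$ since $\|a_i\|_2=1$; letting $\eps\searrow 0$ in \eqref{dual:eq} yields $\min\{(p-e_i)^Tb^*:p\in\ext(P_i)\}=-\sigma_V(a_i)$. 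Finally, rewriting the defining inequalities of $\Omega_\eps$ and $\Omega$ as $\min\{(p-e_i)^Tb:p\in\ext(P_i)\}\ge-\sigma_{B_\eps(V)}(a_i)$ respectively $\ge-\sigma_V(a_i)$, the equalities just proved show these hold, so $b^*_\eps\in\Omega_\eps$ and $b^*\in\Omega$.
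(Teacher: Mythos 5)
Your proof is correct, and its core --- the one-component perturbation $b^\delta:=b^*_\eps-\delta e_j$ leading to a contradiction with minimality, followed by LP strong duality to convert \eqref{primal:eq} into \eqref{dual:eq} --- is exactly the paper's argument. You diverge in two places. First, nonemptiness of the perturbed polytope: you obtain it from an interior point of $X^*_\eps\subset Q_{A,b^*_\eps}$, which genuinely requires $\eps>0$; the paper instead observes that the inequalities $\sigma_{F_\eps(Q_{A,b^*_\eps})}(a_i)\le b^\delta_i$ for all $i$, combined with Proposition \ref{lpc} a), give $\emptyset\neq F_\eps(Q_{A,b^*_\eps})\subset Q_{A,b^\delta}$, so the perturbed polytope contains a nonempty set and no interior-point reasoning is needed. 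Second, as a consequence of this cleaner containment trick, the paper's argument applies verbatim to $\eps=0$ (its proof ends with ``the same arguments work in the case $\eps=0$''), whereas your route forces you to recover the $b^*$ statement by letting $\eps\searrow 0$, using Proposition \ref{eps:converge}, continuity of the minimum of finitely many affine functions of $b$, and $\sigma_{B_\eps(V)}(a_i)=\sigma_V(a_i)+\eps$; this limiting argument is valid and non-circular (Proposition \ref{eps:converge} is proved before this result and independently of it), but it is extra machinery. Relatedly, your remark that nonemptiness ``is exactly the step that fails for the uninflated problem'' and that the $b^*$ statement \emph{must} be handled by a limit is a misconception: only your interior-point device fails at $\eps=0$, not the containment argument, and the real reason the paper inflates the system appears later, in Proposition \ref{perturbed:unique}, where one needs small perturbations of $b^*_\eps$ to still define nonempty polytopes.
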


\begin{proof}
Since $b^*_\eps\in\mc{B}_{X^*_\eps}$, 
we have $Q_{A,b^*_\eps}\neq\emptyset$, 
and hence $F_\eps(Q_{A,b^*_\eps})\neq\emptyset$, 
as well as $F_\eps(Q_{A,b^*_\eps})\subset Q_{A,b^*_\eps}$, which is,
by Proposition \ref{lpc} part c), equivalent with
\begin{equation}\label{forall}
\max\{(C^Ta_i)^Tx:x\in Q_{A,b^*_\eps}\}\le b^*_{\eps,i}-\sigma_{B_\eps(V)}(a_i)\quad
\forall\,i\in\{1,\ldots,N\}.
\end{equation}
Assume that there exist $j\in\{1,\ldots,N\}$ and $\delta>0$ with
\begin{equation}\label{forj}
\max\{(C^Ta_j)^Tx:x\in Q_{A,b^*_\eps}\}\le b^*_{\eps,j}-\sigma_{B_\eps(V)}(a_j)-\delta.
\end{equation}
Then for $b^\delta:=b^*_\eps-\delta e_j$, inequalities \eqref{forall} 
and \eqref{forj} yield
\begin{align*}
\sigma_{F_\eps(Q_{A,b^*_\eps})}(a_i)
&=\max\{(C^Ta_i)^Tx:x\in Q_{A,b^*_\eps}\}+\sigma_{B_\eps(V)}(a_i)\\
&\le b^\delta_i\quad\forall\,i\in\{1,\ldots,N\},
\end{align*}
so $\emptyset\neq F_\eps(Q_{A,b^*_\eps})\subset Q_{A,b^\delta}$ follows from
Proposition \ref{lpc} part a).
By monotonicity, we have
\[\sigma_{F_\eps(Q_{A,b^\delta})}(a_i)
\le\sigma_{F_\eps(Q_{A,b^*_\eps})}(a_i)
\le b_i^\delta\quad\forall\,i\in\{1,\ldots,N\},\]
which is equivalent with $F_\eps(Q_{A,b^\delta})\subset Q_{A,b^\delta}$.
Hence $b^\delta\in\mc{B}_{X^*_\eps}$, but we have 
$\mathbbm{1}^Tb^\delta<\mathbbm{1}^Tb^*_\eps$,
which is a contradiction.
All in all, we have proved equation \eqref{primal:eq},
and equation \eqref{dual:eq} follows from the strong duality theorem 
of linear programming.
Equation \eqref{dual:eq}, in turn, implies that
\[(p-e_i)^Tb^*_\eps\ge-\sigma_{B_\eps(V)}(a_i)\quad\forall\,p\in\ext(P_i),\ 
\forall\,i\in\{1,\ldots,N\},\]
which shows that $b^*_\eps\in\Omega_\eps$.
The same arguments work in the case $\eps=0$.
\end{proof}

The following result shows that the set $Q_{A,b^*_\eps}$ can be computed 
by solving a linear programming problem for $b^*_\eps$.

\begin{proposition} \label{perturbed:unique}
For any $\eps>0$, the unique solution $b^*_\eps$ of the perturbed disjunctive
program \eqref{op1eps} is the unique solution of the linear program
\[\max_b\,\mathbbm{1}^Tb\quad\text{subject to}\quad b\in\Omega_\eps.\]
\end{proposition}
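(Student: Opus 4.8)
The plan is to prove optimality and uniqueness in one stroke by showing that $b^*_\eps$ dominates the entire feasible set: every $b\in\Omega_\eps$ satisfies $b\le b^*_\eps$ componentwise. Feasibility of $b^*_\eps$ itself is already recorded in Proposition \ref{locmin}, so $b^*_\eps$ attains the value $\mathbbm{1}^Tb^*_\eps$. Once the domination $b\le b^*_\eps$ is in hand, the strict positivity of the cost vector $\mathbbm{1}$ forces $\mathbbm{1}^Tb\le\mathbbm{1}^Tb^*_\eps$ with equality precisely when $b=b^*_\eps$, which simultaneously yields that $b^*_\eps$ is a maximizer and that it is the only one.

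To prove the domination I would linearize the active constraints at $b^*_\eps$. By \eqref{dual:eq} the minimum defining the $i$-th constraint group of $\Omega_\eps$ is attained, and by strong duality at the nonempty polytope $Q_{A,b^*_\eps}$ it is attained at a vertex $p^*_i\in\ext(P_i)$ with $(e_i-p^*_i)^Tb^*_\eps=\sigma_{B_\eps(V)}(a_i)$. For arbitrary $b\in\Omega_\eps$, the defining inequality of $\Omega_\eps$ tested against this particular vertex gives $(e_i-p^*_i)^Tb\le\sigma_{B_\eps(V)}(a_i)$, and subtracting the two relations yields $(e_i-p^*_i)^T(b-b^*_\eps)\le0$. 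Writing $\delta:=b-b^*_\eps$ and collecting the vertices into the nonnegative matrix $\Pi:=[\,p^*_1\,|\,\cdots\,|\,p^*_N\,]$, this reads $\delta\le\Pi^T\delta$ for every $b\in\Omega_\eps$; crucially this step uses only the explicit halfspaces of $\Omega_\eps$ and so needs no nonemptiness of $Q_{A,b}$. Since $A^Te_i=a_i$ and $A^Tp^*_i=C^Ta_i$, the matrix $\Pi$ satisfies the identity $A^T\Pi=C^TA^T$. If $\rho(\Pi)<1$, then $I-\Pi^T$ is a nonsingular M-matrix with $(I-\Pi^T)^{-1}=\sum_{k\ge0}(\Pi^T)^k\ge0$ entrywise; applying this nonnegative inverse to $(I-\Pi^T)\delta\le0$ gives $\delta\le0$, that is $b\le b^*_\eps$, as required.

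Everything therefore hinges on the spectral bound $\rho(\Pi)<1$, which I expect to be the main obstacle. One half is cheap: the identity $A^T\Pi=C^TA^T$ shows that $\ker A^T$ is $\Pi$-invariant and that on the quotient $\R^N/\ker A^T\cong\R^d$ the map $\Pi$ is conjugate to $C^T$, so $\Pi$ is block upper triangular with diagonal blocks (a representation of) $C^T$ and $\Pi_0:=\Pi|_{\ker A^T}$, giving $\rho(\Pi)=\max(\rho(C),\rho(\Pi_0))$ and $\rho(C)<1$. The delicate point is $\rho(\Pi_0)<1$, on which the identity gives no information. This is exactly where $\eps>0$ is used: the inflation guarantees $\interior(Q_{A,b^*_\eps})\ne\emptyset$, so by Proposition 36 of \cite{Rieger:17} the omitted nonemptiness constraints $p^Tb^*_\eps>0$ hold strictly and $b^*_\eps$ is a nondegenerate fixed point of the monotone map $b\mapsto G_\eps(b)$, where $G_\eps(b)_i=\min_{p\in P_i}p^Tb+\sigma_{B_\eps(V)}(a_i)$ is the support representation of $\pi_{\mc{G}_A}(F_\eps(Q_{A,b}))$ and whose linearization at $b^*_\eps$ is precisely $\Pi^T$. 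The strict contraction of $\pi_{\mc{G}_A}\circ F_\eps$ afforded by Assumption \ref{kappaineqass} (through Lemma \ref{not:worse:than:ell} and Theorem \ref{projector}) then pins down $\rho(\Pi_0)<1$. An equivalent, more self-contained route to the same conclusion bypasses $\Pi$ entirely: rewrite the $\Omega_\eps$-constraints via Proposition \ref{lpc} as $b\le G_\eps(b)$, iterate the monotone map $G_\eps$ upward from $b$, and use its contraction and the resulting uniqueness of the fixed point $b^*_\eps$ to squeeze $b\le b^*_\eps$, trading the spectral bound for the same contraction estimate.
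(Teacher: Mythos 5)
Your reduction of the statement to the componentwise domination $b\le b^*_\eps$ for all $b\in\Omega_\eps$, and of domination to the M-matrix property of $I-\Pi^T$, is sound as far as it goes: the active-vertex inequalities do give $(I-\Pi^T)(b-b^*_\eps)\le 0$, and $\rho(\Pi)<1$ would finish the proof. The genuine gap is your justification of $\rho(\Pi)<1$, i.e.\ of $\rho(\Pi_0)<1$ on $\kernel(A^T)$. You appeal to a ``strict contraction of $\pi_{\mc{G}_A}\circ F_\eps$ afforded by Assumption \ref{kappaineqass}'', but no such contraction is established anywhere in the paper, and Assumption \ref{kappaineqass} cannot supply it: it bounds the overapproximation constant $\kappa_A$ in $X\subset\pi_{\mc{G}_A}(X)\subset B(X,\kappa_A\|X\|)$, which is a completely different quantity from the Lipschitz constant $L_A$ of $\pi_{\mc{G}_A}$ appearing in Theorem \ref{projector}; the composition $\pi_{\mc{G}_A}\circ F_\eps$ is only known to be Lipschitz with constant of order $L_A\ell$ (times norm-equivalence factors), and nothing in the paper forces this below $1$. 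Lemma \ref{not:worse:than:ell} is a one-sided statement about inflations of one fixed set, not a contraction estimate between two arbitrary arguments. Your ``more self-contained route'' rests on the same unproven contraction, and in addition on uniqueness of the fixed point of $G_\eps$, which is essentially the statement being proved. Finally, $G_\eps$ is piecewise linear, so even locally $\Pi^T$ need not be a derivative when several vertices are active at $b^*_\eps$.

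The spectral bound you need is in fact true, but for a different reason, which would repair your proof: by \eqref{dual:eq}, $\Pi^Tb^*_\eps=b^*_\eps-s$ with $s_i=\sigma_{B_\eps(V)}(a_i)$. Since $I-C$ is invertible, you may translate the system ($V\mapsto V+(I-C)z$, $b\mapsto b+Az$; the polyhedra $P_i$, the defining inequalities of $\Omega_\eps$ and the minimizing vertices are unchanged or covariant, because $p^TAz=(C^Ta_i)^Tz$ is constant on $P_i$) so that $0\in V$. Then $0\in X^*_\eps$ and $B_\eps(V)\subset X^*_\eps\subset Q_{A,b^*_\eps}$ give $b^*_{\eps,i}\ge s_i\ge\eps>0$, and the Collatz--Wielandt bound for the nonnegative matrix $\Pi^T$ yields $\rho(\Pi^T)\le\max_i(1-s_i/b^*_{\eps,i})<1$; this is where $\eps>0$ genuinely enters. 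For comparison, the paper avoids spectral arguments entirely: it assumes a dual-feasible $b_*\neq b^*_\eps$ with $\mathbbm{1}^Tb_*\ge\mathbbm{1}^Tb^*_\eps$, uses the margin $p^Tb^*_\eps\ge\delta>0$ on $\ext(P_0)$ (Proposition 37 of \cite{Rieger:17}, available because $\eps>0$ makes $\interior(Q_{A,b^*_\eps})$ nonempty) to show that the extrapolated point $\bar b=b^*_\eps+\tfrac{\delta}{\|b^*_\eps-b_*\|_\infty}(b^*_\eps-b_*)$ lies in $\mc{B}_{X^*_\eps}$ while $\mathbbm{1}^T\bar b\le\mathbbm{1}^Tb^*_\eps$, contradicting uniqueness of the primal minimizer. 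Your approach, once repaired as above, proves the strictly stronger geometric fact $\Omega_\eps\subset b^*_\eps-\R^N_+$, which the paper's argument does not yield; but as written, the central step is unsupported.
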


The need to consider an arbitrarily small inflation of the set $V$ 
arises from the following proof, in which we need that small perturbations $b$
of the point $b^*_\eps$ satisfy $Q_{A,b}\neq\emptyset$.

\begin{proof}
Assume that there exists $b_*\in\Omega_\eps\setminus\{b^*_\eps\}$ with 
$\mathbbm{1}^Tb_*\ge\mathbbm{1}^Tb^*_\eps$. 
Since $b^*_\eps\in\mc{B}_{X^*_\eps}$, we have
\[\emptyset\neq\interior B_\eps(V)
\subset\interior F_\eps(Q_{A,b^*_\eps})\subset\interior Q_{A,b^*_\eps},\]
and by Proposition 37 in \cite{Rieger:17}, there exists 
$\delta>0$ such that $p^Tb^*_\eps\ge\delta$ for all $p\in\ext(P_0)$. 
By H\"older inequality, the vector 
$\bar{b}:=b^*_\eps+\tfrac{\delta}{\|b^*_\eps-b_*\|_\infty}(b^*_\eps-b_*)$
satisfies
\[p^T\bar{b}
=p^Tb^*_\eps+\tfrac{\delta}{\|b^*_\eps-b_*\|_\infty}p^T(b^*_\eps-b_*)
\ge\delta-\delta\|p\|_1
=0\quad\forall p\in\ext(P_0),\]
so $Q_{A,\bar{b}}\neq\emptyset$ by part b) of Proposition \ref{lpc}.
Since $b_*\in\Omega_\eps$ and by Proposition \ref{locmin}, for every $i\in\{1,\ldots,N\}$,
there exists $p\in\ext(P_i)$ such that
\[(e_i-p)^Tb_*\le\sigma_{B_\eps(V)}(a_i)\quad\text{and}\quad
(p-e_i)^Tb^*_\eps=-\sigma_{B_\eps(V)}(a_i).\]
From this we conclude that
\begin{align*}
(p-e_i)^T\bar{b}
&=(p-e_i)^T(b^*_\eps+\tfrac{\delta}{\|b^*_\eps-b_*\|_\infty}(b^*_\eps-b_*))\\
&=(1+\tfrac{\delta}{\|b^*_\eps-b_*\|_\infty})(p-e_i)^Tb^*_\eps-\tfrac{\delta}{\|b^*_\eps-b_*\|_\infty}(p-e_i)^Tb_*\\
&\le-(1+\tfrac{\delta}{\|b^*_\eps-b_*\|_\infty})\sigma_{B_\eps(V)}(a_i)
+\tfrac{\delta}{\|b^*_\eps-b_*\|_\infty}\sigma_{B_\eps(V)}(a_i)
=-\sigma_{B_\eps(V)}(a_i),
\end{align*}
and hence that $F_\eps(Q_{A,\bar{b}})\subset Q_{A,\bar{b}}$ according to
Proposition \ref{lpc} part c).
All in all, we have $\bar{b}\in\mc{B}_{X^*_\eps}$, but 
\[\mathbbm{1}^T\bar{b}
=\mathbbm{1}^T(b^*_\eps+\tfrac{\delta}{\|b^*_\eps-b_*\|_\infty}(b^*_\eps-b_*))
=(1+\tfrac{\delta}{\|b^*_\eps-b_*\|_\infty})\mathbbm{1}^Tb^*_\eps
-\tfrac{\delta}{\|b^*_\eps-b_*\|_\infty}\mathbbm{1}^Tb_*
\le\mathbbm{1}^Tb^*_\eps,\]
which is impossible, because the point $b^*_\eps$ is the unique global minimum 
of Problem \eqref{op1}.
\end{proof}

\section{The unperturbed dual LP}

Now we conclude that the approximation $Q_{A,b^*}$ to $X^*$ we wish to compute
is indeed given by the unique solution of the unperturbed dual linear program.

\begin{theorem}
The unique solution $b^*$ of the disjunctive program \eqref{op1} 
is the unique solution of the linear program
\[\max_b\,\mathbbm{1}^Tb\quad\text{subject to}\quad b\in\Omega.\]
\end{theorem}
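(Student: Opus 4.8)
The plan is to prove the two halves of the statement---that $b^*$ maximizes $\mathbbm{1}^Tb$ over $\Omega$, and that it is the \emph{only} maximizer---by entirely different means, the second being the substantial one. For optimality I would use a monotonicity-plus-limit argument. Because $\|a_i\|_2=1$, the inflated support values satisfy $\sigma_{B_\eps(V)}(a_i)=\sigma_V(a_i)+\eps$, so each defining inequality of $\Omega_\eps$ is the corresponding inequality of $\Omega$ relaxed by the same amount $\eps$; hence $\Omega\subset\Omega_\eps$ for every $\eps>0$, with $\bigcap_{\eps>0}\Omega_\eps=\Omega$. For any $b\in\Omega\subset\Omega_\eps$, optimality of $b^*_\eps$ over the larger set $\Omega_\eps$ (Proposition \ref{perturbed:unique}) gives $\mathbbm{1}^Tb\le\mathbbm{1}^Tb^*_\eps$, and letting $\eps\searrow0$ while using $b^*_\eps\to b^*$ (Proposition \ref{eps:converge}) and continuity of $b\mapsto\mathbbm{1}^Tb$ yields $\mathbbm{1}^Tb\le\mathbbm{1}^Tb^*$. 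Since $b^*\in\Omega$ by Proposition \ref{locmin}, the point $b^*$ is a maximizer.

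For uniqueness I would first reduce the claim to local uniqueness. The set of maximizers of a linear functional over a polyhedron is a convex face, and a convex set that contains $b^*$ as an isolated optimal point is necessarily the singleton $\{b^*\}$; thus it suffices to show that $b^*$ is an \emph{isolated} maximizer of $\mathbbm{1}^Tb$ over $\Omega$. The difficulty here is genuine and cannot be circumvented by the elementary limit above: uniqueness of the maximizer need not survive the passage $\eps\searrow0$, because the limiting polyhedron $\Omega$ may possess a higher-dimensional face of maximizers even though each perturbed program has a unique solution $b^*_\eps$. I would therefore regard the family $b^*_\eps$ as the solutions of the (dual) linear program under the right-hand-side perturbation that replaces $\sigma_V(a_i)$ by $\sigma_{B_\eps(V)}(a_i)=\sigma_V(a_i)+\eps$, record that these solutions are single-valued for every $\eps>0$ (Proposition \ref{perturbed:unique}) and converge to $b^*$, and then invoke the stability result of \cite{Robinson}, which ties uniqueness of a linear program's solution to the behaviour of its solutions under data perturbation, to transfer the single-valuedness available for $\eps>0$ to the limiting data and conclude that $b^*$ is the unique solution over $\Omega$.

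The main obstacle is precisely the point at which the elementary argument of Proposition \ref{perturbed:unique} breaks down. There, uniqueness rested on the strict inequalities $p^Tb^*_\eps>0$ for all $p\in\ext(P_0)$, which in turn followed from $\interior B_\eps(V)\neq\emptyset$ and hence $\interior Q_{A,b^*_\eps}\neq\emptyset$. At $\eps=0$ the limit set $X^*$ may be lower-dimensional, so $\interior Q_{A,b^*}$ can be empty and this constraint qualification can fail; a verbatim repetition of Proposition \ref{perturbed:unique} is then impossible, which is exactly why the perturbation was introduced. The real work, therefore, lies in checking that the hypotheses of the stability theorem of \cite{Robinson} are met in our degenerate unperturbed situation, so that the uniqueness and convergence of the $b^*_\eps$ indeed force $b^*$ to be isolated---and hence the unique maximizer---over $\Omega$.
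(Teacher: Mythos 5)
Your proposal is correct and follows essentially the same route as the paper: in both, the substance is Proposition \ref{perturbed:unique} (single-valuedness of the perturbed maximizers), Proposition \ref{eps:converge} ($b^*_\eps\to b^*$), and Corollary 3.1 of \cite{Robinson}. The differences are worth recording. For the optimality half, your monotonicity-and-limit argument ($\sigma_{B_\eps(V)}(a_i)=\sigma_V(a_i)+\eps$, hence $\Omega\subset\Omega_\eps$, hence $\mathbbm{1}^Tb\le\mathbbm{1}^Tb^*_\eps\to\mathbbm{1}^Tb^*$ for all $b\in\Omega$) is sound and more elementary than the paper, which never argues optimality of $b^*$ directly: it only checks that $\argmax_{b\in\Omega}\mathbbm{1}^Tb\neq\emptyset$ (feasibility since $b^*\in\Omega$ by Proposition \ref{locmin}, boundedness since $\Omega\subset\Omega_1$) and lets the stability theorem identify the maximizer. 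For the uniqueness half, the step you defer as ``the real work'' is considerably shorter than you suggest, and your reduction to an isolated maximizer is unnecessary. Corollary 3.1 of \cite{Robinson}, in the form the paper quotes it, states: if $\tilde b\in\argmax_{b\in\Omega}\mathbbm{1}^Tb$ and $\argmax_{b\in\Omega_\eps}\mathbbm{1}^Tb\neq\emptyset$ for all $\eps\in(0,\eps_0]$, then there is a selection $\tilde b_\eps\in\argmax_{b\in\Omega_\eps}\mathbbm{1}^Tb$ with $\tilde b_\eps\to\tilde b$. Since each $\argmax_{b\in\Omega_\eps}\mathbbm{1}^Tb$ is the singleton $\{b^*_\eps\}$, the only possible selection is $\tilde b_\eps=b^*_\eps$, so $\tilde b=\lim_{\eps\searrow 0}b^*_\eps=b^*$ for \emph{every} maximizer $\tilde b$; the hypotheses to verify are exactly the nonemptiness statements, which your own optimality argument (or the paper's boundedness remark) already supplies, and no constraint qualification on the degenerate limit problem enters at this stage. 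One caution on your framing: what the stability result transfers is not ``single-valuedness'' (you correctly observe that uniqueness need not survive the limit $\eps\searrow 0$ in general), but rather the upper-semicontinuity property that every solution of the limit program arises as a limit of solutions of the perturbed programs; it is this, combined with single-valuedness \emph{for} $\eps>0$ and the known limit $b^*$, that pins the solution set of the unperturbed dual LP down to $\{b^*\}$.
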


\begin{proof}
By Proposition \ref{perturbed:unique}, for any $\eps>0$, the unique solution 
$b^*_\eps$ of the disjunctive program \eqref{op1eps} is the unique solution 
of the linear program
\begin{equation} \label{loc:1}
\max_b\,\mathbbm{1}^Tb\quad\text{subject to}\quad b\in\Omega_\eps.
\end{equation}
By Proposition \ref{locmin}, we have $b^*\in\Omega$, so the linear program
\begin{equation} \label{loc:2}
\max_b\,\mathbbm{1}^Tb\quad\text{subject to}\quad b\in\Omega
\end{equation}
is feasible. 
Since $\Omega\subset\Omega_\eps$, the value of Problem \eqref{loc:2} is bounded 
by the value of Problem \eqref{loc:1} with $\eps=1$, so 
$\argmax_{b\in\Omega}\mathbbm{1}^Tb\neq\emptyset$.
Corollary 3.1 from \cite{Robinson} yields that if 
$\tilde{b}\in\argmax_{b\in\Omega}\mathbbm{1}^Tb$ and there exists $\eps_0>0$ with
$\argmax_{b\in\Omega_\eps}\mathbbm{1}^Tb\neq\emptyset$ for $\eps\in(0,\eps_0]$, then
there exist $\tilde{b}_\eps\in\argmax_{b\in\Omega_\eps}\mathbbm{1}^Tb$ 
with
\[\tilde{b}=\lim_{\eps\searrow 0}\tilde{b}_\eps.\]
In the present situation, we have $b^*_\eps=\argmax_{b\in\Omega_\eps}\mathbbm{1}^Tb$,
so using Proposition \ref{eps:converge}, we conclude that
\[\argmax_{b\in\Omega}\mathbbm{1}^Tb=\lim_{\eps\searrow 0}b^*_\eps=b^*.\]
\end{proof}

\subsection*{Acknowledgement}
We thank Andrew Eberhard for pointing us towards the term \emph{disjunctive
programming}, which we would otherwise never have found in the maze of mathematical
literature.

\bibliographystyle{plain}
\bibliography{appinf}

\begin{thebibliography}{10}

\bibitem{Artstein}
Z.~Artstein and S.V. Rakovi\'{c}.
\newblock Feedback and invariance under uncertainty via set-iterates.
\newblock {\em Automatica}, 44(2):520--525, 2008.

\bibitem{Baier}
R.~Baier, C.~B\"{u}skens, I.A. Chahma, and M.~Gerdts.
\newblock Approximation of reachable sets by direct solution methods for
  optimal control problems.
\newblock {\em Optim. Methods Softw.}, 22(3):433--452, 2007.

\bibitem{Balas}
E.~Balas.
\newblock {\em Disjunctive programming}.
\newblock Springer, Cham, 2018.

\bibitem{BenSassi}
M.A. Ben~Sassi, R.~Testylier, T.~Dang, and A.~Girard.
\newblock Reachability analysis of polynomial systems using linear programming
  relaxations.
\newblock In S.~Chakraborty and M.~Mukund, editors, {\em Automated Technology
  for Verification and Analysis}, pages 137--151. Springer Berlin, 2012.

\bibitem{Dorea}
C.E.T. D\'{o}rea and J.C. Hennet.
\newblock {$(A,B)$}-invariant polyhedral sets of linear discrete-time systems.
\newblock {\em J. Optim. Theory Appl.}, 103(3):521--542, 1999.

\bibitem{Gaitsgory}
V.~Gaitsgory and M.~Quincampoix.
\newblock Linear programming approach to deterministic infinite horizon optimal
  control problems with discounting.
\newblock {\em SIAM J. Control Optim.}, 48(4):2480--2512, 2009.

\bibitem{Girard}
A.~Girard, C.~Le~Guernic, and O.~Maler.
\newblock Efficient computation of reachable sets of linear time-invariant
  systems with inputs.
\newblock In {\em Hybrid systems: computation and control}, volume 3927 of {\em
  Lecture Notes in Comput. Sci.}, pages 257--271. Springer, Berlin, 2006.

\bibitem{Graettinger}
T.J. Graettinger and B.H. Krogh.
\newblock Hyperplane method for reachable state estimation for linear
  time-invariant systems.
\newblock {\em J. Optim. Theory Appl.}, 69(3):555--588, 1991.

\bibitem{Horn:13}
R.A. Horn and C.R. Johnson.
\newblock {\em Matrix analysis}.
\newblock Cambridge University Press, Cambridge, second edition, 2013.

\bibitem{Hu:97}
S.~Hu and N.S. Papageorgiou.
\newblock {\em Handbook of multivalued analysis. {V}ol. {I}}, volume 419 of
  {\em Mathematics and its Applications}.
\newblock Kluwer Academic Publishers, Dordrecht, 1997.
\newblock Theory.

\bibitem{Theobald:13}
M.~Joswig and T.~Theobald.
\newblock {\em Polyhedral and algebraic methods in computational geometry}.
\newblock Universitext. Springer, London, 2013.

\bibitem{Kolmanovsky}
I.~Kolmanovsky and E.G. Gilbert.
\newblock Theory and computation of disturbance invariant sets for
  discrete-time linear systems.
\newblock {\em Math. Prob. Eng.}, 4(4):317--367, 1998.

\bibitem{Korda}
M.~Korda, D.~Henrion, and C.N. Jones.
\newblock Convex computation of the maximum controlled invariant set for
  polynomial control systems.
\newblock {\em SIAM J. Control Optim.}, 52(5):2944--2969, 2014.

\bibitem{Fiacchini}
S.V. Raković and M.~Fiacchini.
\newblock Approximate reachability analysis for linear discrete time systems
  using homothety and invariance.
\newblock {\em IFAC Proceedings Volumes}, 41(2):15327 -- 15332, 2008.

\bibitem{Rakovic05}
S.V. Rakovic, E.C. Kerrigan, K.I. Kouramas, and D.Q. Mayne.
\newblock Invariant approximations of the minimal robust positively invariant
  set.
\newblock {\em Optimization}, 50(3):406--410, 2005.

\bibitem{Rieger:17}
J.~Rieger.
\newblock A {G}alerkin approach to optimization in the space of convex and
  compact subsets of {{$\R^d$}}.
\newblock arXiv:1711.10106, 2017.

\bibitem{Robinson}
S.M. Robinson.
\newblock Bounds for error in the solution set of a perturbed linear program.
\newblock {\em Linear Algebra and Appl.}, 6:69--81, 1973.

\bibitem{Schneider}
R.~Schneider.
\newblock {\em Convex bodies: the {B}runn-{M}inkowski theory}, volume 151 of
  {\em Encyclopedia of Mathematics and its Applications}.
\newblock Cambridge University Press, Cambridge, 2014.

\bibitem{Shao}
L.~Shao, F.~Zhao, and Y.~Cong.
\newblock Approximation of convex bodies by multiple objective optimization and
  an application in reachable sets.
\newblock {\em Optimization}, 67(6):783--796, 2018.

\bibitem{Zeidler:86}
E.~Zeidler.
\newblock {\em Nonlinear functional analysis and its applications}, volume~I.
\newblock Springer-Verlag, New York, 1986.

\bibitem{Ziegler}
G.M. Ziegler.
\newblock {\em Lectures on polytopes}, volume 152 of {\em Graduate Texts in
  Mathematics}.
\newblock Springer-Verlag, New York, 1995.

\end{thebibliography}

\end{document}